\definecolor{lightblue}{rgb}{.90,.95,1}
\newtheorem{lemma}{\bf Lemma}
\newtheorem{proposition}{\bf Proposition}
\newtheorem{remark}{Remark}
\newtheorem{assumption}{\bf Assumption}
\begin{document}
%
\title{Exit Probabilities for a Chain of Distributed Control Systems with Small Random Perturbations}

\author{Getachew~K.~Befekadu,~\IEEEmembership{Member,~IEEE} and~Panos~J.~Antsaklis,~\IEEEmembership{Fellow,~IEEE}
\thanks{This work was supported in part by the National Science Foundation under Grant No. CNS-1035655. The first author acknowledges support from the Department of Electrical Engineering, University of Notre Dame.}
\IEEEcompsocitemizethanks{\IEEEcompsocthanksitem G. K. Befekadu is with the Department
of Electrical Engineering, University of Notre Dame, Notre Dame, IN 46556, USA.\protect\\
E-mail: gbefekadu1@nd.edu
\IEEEcompsocthanksitem P. J. Antsaklis is with the Department
of Electrical Engineering, University of Notre Dame, Notre Dame, IN 46556, USA.\protect\\
E-mail: antsaklis.1@nd.edu}}

\markboth{IEEE TRANSACTIONS ON AUTOMATIC CONTROL,~Vol.~xx, No.~xx, xxx~xxx}%
{Shell \MakeLowercase{\textit{et al.}}: Bare Advanced Demo of IEEEtran.cls for Journals}
\IEEEcompsoctitleabstractindextext{%
\begin{abstract}
In this paper, we consider a diffusion process pertaining to a chain of distributed control systems with small random perturbation. The distributed control system is formed by $n$ subsystems that satisfy an appropriate H\"{o}rmander condition, i.e., the second subsystem assumes the random perturbation entered into the first subsystem, the third subsystem assumes the random perturbation entered into the first subsystem then was transmitted to the second subsystem and so on, such that the random perturbation propagates through the entire distributed control system. Note that the random perturbation enters only in one of the subsystems and, hence, the diffusion process is degenerate, in the sense that the backward operator associated with it is a degenerate parabolic equation. Our interest is to estimate the exit probability with which a diffusion process (corresponding to a particular subsystem) exits from a given bounded open domain during a certain time interval. The method for such an estimate basically relies on the interpretation of the exit probability function as a value function for a family of stochastic control problems that are associated with the underlying chain of distributed control systems.
\end{abstract}

\begin{IEEEkeywords}
Distributed control systems, exit probability, diffusion process, stochastic control problem.
\end{IEEEkeywords}}

\maketitle

\IEEEdisplaynotcompsoctitleabstractindextext

%
\IEEEpeerreviewmaketitle

\section{Introduction}	 \label{S1}
In this paper, we consider the problem of estimating probabilities with which the diffusion process $x^{\epsilon, i}(t)$, for $i = 1, 2, \ldots, n$, exits from a given bounded open domain during a certain time interval pertaining to the following $n$ distributed control systems (see Fig.~\ref{Fig-DCS})
\begin{align}
\left.\begin{array}{l}
d x^{\epsilon,1}(t) = f_1\bigl(t, x^{\epsilon,1}(t), u_1(t)\bigr) dt + \sqrt{\epsilon} \sigma\bigl(t, x^{\epsilon,1}(t))dW(t) \\
d x^{\epsilon,2}(t) = f_2\bigl(t, x^{\epsilon,1}(t), x^{\epsilon,2}(t), u_2(t)\bigr) dt  \\
 \quad\quad\quad~ \vdots  \\
d x^{\epsilon,i}(t) = f_{i}\bigl(t, x^{\epsilon,1}(t), x^{\epsilon,2}(t), \ldots, x^{\epsilon,i}(t), u_{i}(t)\bigr) dt  \\
 \quad\quad\quad~ \vdots  \\
d x^{\epsilon,n}(t) = f_n\bigl(t, x^{\epsilon,1}(t), x^{\epsilon,2}(t), \ldots, x^{\epsilon,n}(t), u_{n}(t)\bigr) dt\\
 x^{\epsilon,1}(s)=x_s^{\epsilon,1}, \,\, x^{\epsilon,2}(s)=x_s^{\epsilon,2}, \,\, \ldots, \,\, x^{\epsilon,n}(s)=x_s^{\epsilon,n}, \,\, s \le t \le T
\end{array}\right\}  \label{Eq1} 
\end{align}
where
\begin{itemize}
\item[-] $x^{\epsilon,i}(\cdot)$ is an $\mathbb{R}^{d}$-valued diffusion process that corresponds to the $i$th-subsystem,
\item[-] the functions $f_i \colon [0, \infty) \times \mathbb{R}^{i \times d} \times \mathcal{U}_i \rightarrow \mathbb{R}^{d}$ are uniformly Lipschitz, with bounded first derivatives, $\epsilon$ is a small positive number (which represents the level of random perturbation in the system),
\item[-] $\sigma \colon [0, \infty) \times \mathbb{R}^{d} \rightarrow \mathbb{R}^{d \times m}$ is Lipschitz with the least eigenvalue of $\sigma(\cdot, \cdot)\,\sigma^T(\cdot, \cdot)$ uniformly bounded away from zero, i.e., 
\begin{align*}
 \sigma(t, x)\,\sigma^T(t, x) \ge \lambda I_{d \times d} , \quad \forall x \in \mathbb{R}^{d},
\end{align*}
for some $\lambda > 0$,
\item[-] $W(\cdot)$ is a $d$-dimensional standard Wiener process (with $W(0) = 0$),
\item[-] $u_i(\cdot)$ is a $\,\mathcal{U}_i$-valued measurable control process to the $i$th-subsystem (i.e., an admissible control from the measurable set $\mathcal{U}_i\subset \mathbb{R}^{r_i}$) such that for all $t > s$, $W(t)-W(s)$ is independent of $u_i(\nu)$ for $\nu \le s$ (nonanticipativity condition) and
\begin{align*}
\mathbb{E} \int_{s}^{t_1} \vert u_i(t)\vert^2 dt < \infty, \quad \forall t_1 \ge s,
\end{align*}
\end{itemize}
for $i = 1, 2, \ldots, n$.

\begin{figure}[tbh]
\vspace{-5 mm}
\begin{center}
\subfloat{\includegraphics[width=50mm]{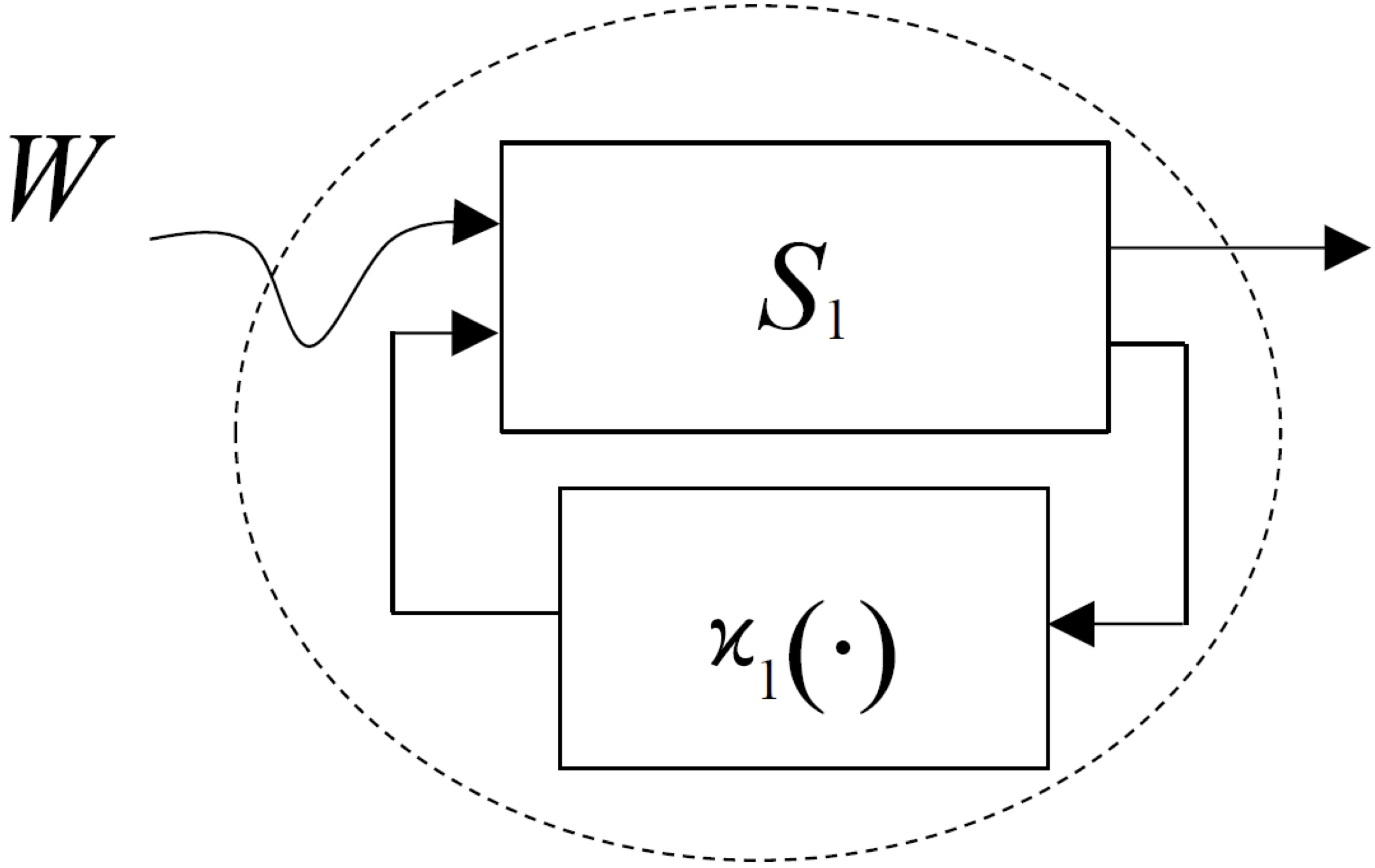}}
\subfloat{\includegraphics[width=62mm]{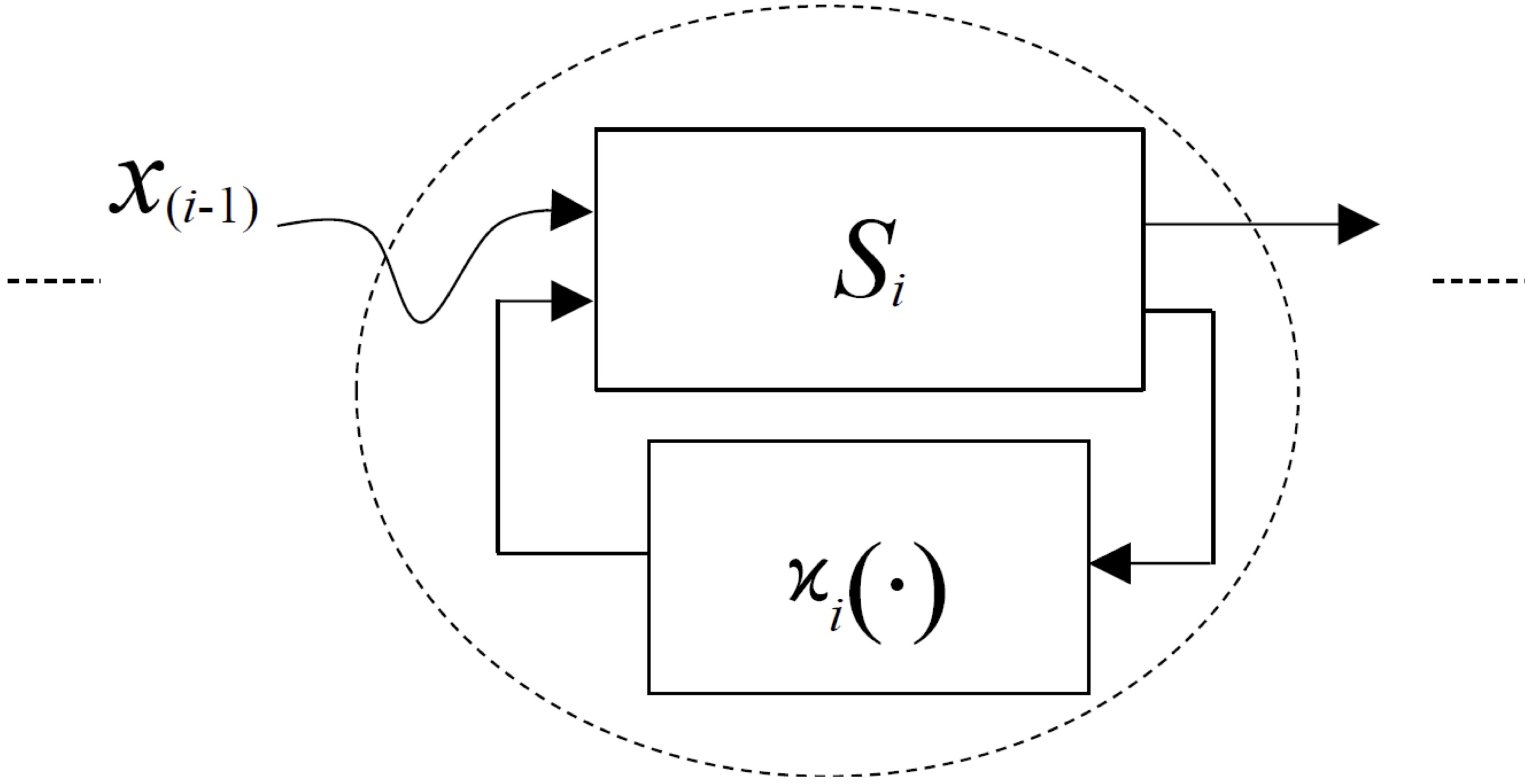}}
\subfloat{\includegraphics[width=51mm]{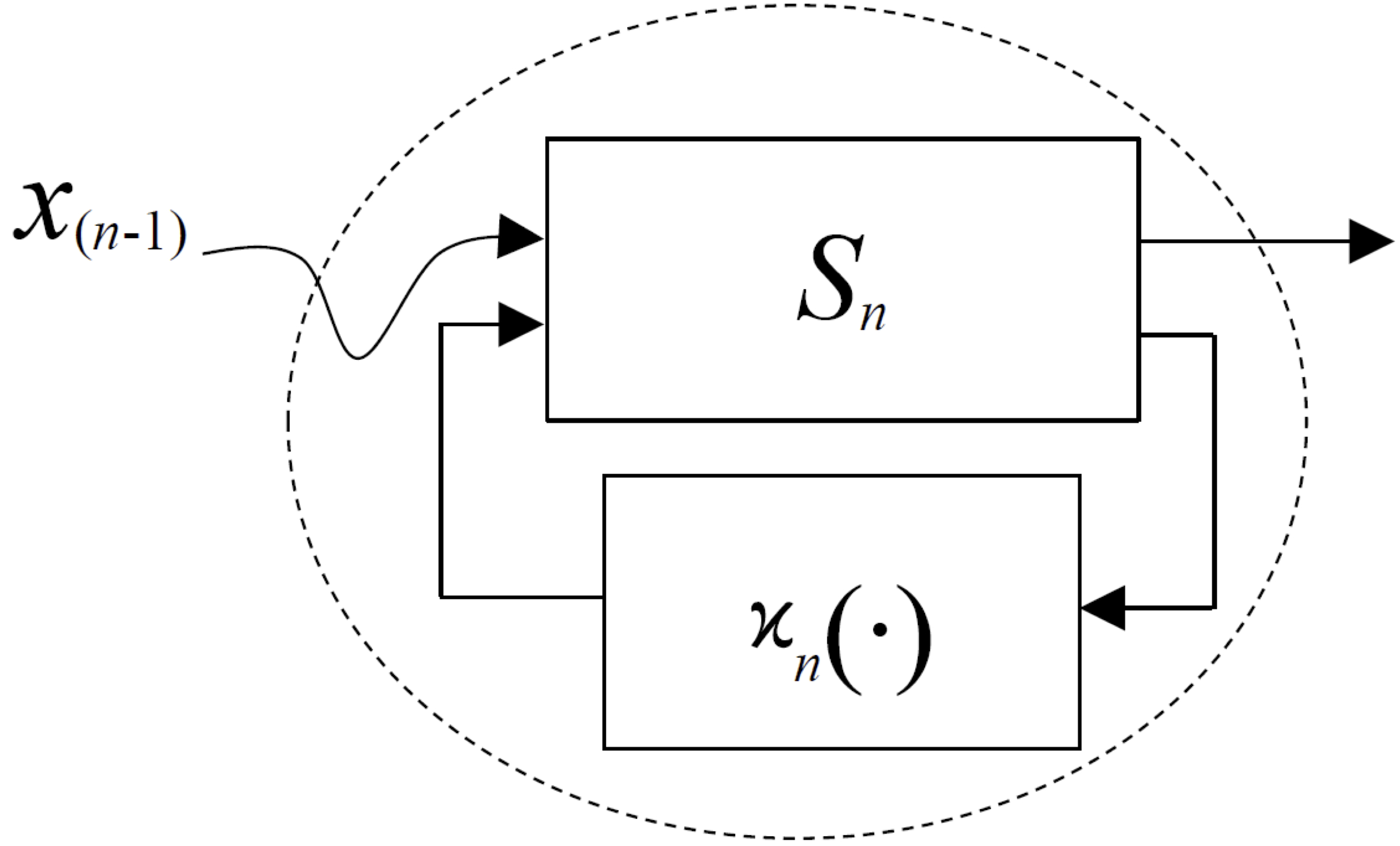}}\\
{$ \begin{array}{r@{\ }c@{\ }l}
~\vspace{-5 mm}\\
\text{where} & \\
&S_1:  \, \, d x^{\epsilon,1}(t) = f_1\bigl(t, x^{\epsilon,1}(t), u_1(t)\bigr) dt + \sqrt{\epsilon} \sigma\bigl(t, x^{\epsilon,1}(t))dW(t), \\
&S_i: \,\, d x^{\epsilon,i}(t) = f_{i}\bigl(t, x^{\epsilon,1}(t), x^{\epsilon,2}(t), \ldots, x^{\epsilon,i}(t), u_{i}(t)\bigr) dt, ~ i = 2, 3, \ldots n,\\
     & \quad \quad u_j(t)= \kappa_j(x^{\epsilon,j}(t)), ~ t \in [s, T], ~ j =1, 2,  \ldots n.
\end{array}$}
\caption{A chain of distributed control systems with small random perturbations} \label{Fig-DCS}
\vspace{-5 mm}
\end{center}
\end{figure}

In what follows, we consider a particular class of admissible controls $u_{i}(\cdot) \in \mathcal{U}_{i}$, for $i = 1, 2 \ldots, n$, of the form $\kappa_{i}(x^{\epsilon,i}(t))$, $\forall t \ge s$, with a measurable map $\kappa_{i}$ from $\mathbb{R}^{d}$ to $\mathcal{U}_{i}$ and, thus, such a measurable map $\kappa_i$ is called a stationary Markov control. 

\begin{remark} \label{R1}
Note that, in Equation~\eqref{Eq1}, the function $f_i$, with the admissible control $\kappa_i(x^{\epsilon,i})$, depends only on $x^{\epsilon,1}(t)$, $x^{\epsilon,2}(t)$, \ldots, $x^{\epsilon,i}(t)$ and satisfies an appropriate H\"{o}rmander condition (e.g., see \cite{Hor67} for further discussion). Furthermore, the random perturbation has to pass through the second subsystem, the third subsystem, \dots, and the $(i-1)$th-subsystem to reach for the $i$th-subsystem. Hence, such a chain of distributed control systems is described by an $n \times d$-dimensional diffusion process, which is degenerate in the sense that the backward operator associated with it is a degenerate parabolic equation. 
\end{remark}

Let $D \subset \mathbb{R}^{d}$ be a bounded open domain with smooth boundary (i.e., $\partial D$ is a manifold of class $C^2$). Moreover, let $\tau_D^{\epsilon,\ell}=\tau_D^{\epsilon,{\ell}}(s, x^{\epsilon,1}, \ldots, x^{\epsilon,\ell})$, for $\ell = 2, 3, \ldots, n$, be the first exit-time for the diffusion process $x^{\epsilon, \ell}(t)$ (corresponding to the $\ell$th-subsystem, with the admissible control $\kappa_{\ell}(\xi^{\ell}(t))$, for $t \ge s \ge 0$) from the given domain $D$, i.e., 
\begin{align}
\tau_D^{\epsilon,\ell} = \inf \Bigl\{ t > s \, \bigl\vert \, x^{\epsilon,\ell}(t) \in \partial D \Bigr\}, \label{Eq2}
\end{align}
which depends on the behavior of the solutions to the following (deterministic) chain of distributed control systems, i.e.,
\begin{align}
 d \xi^{j}(t) = \hat{f}_{j}(t, \xi^{1}(t), \ldots, \xi^{j}(t)\bigr) dt, \quad \xi^{j}(s)=x_s^{j}, \quad s \le t \le T, \label{Eq3}
\end{align}
for $j = 1, \ldots, \ell$, with
\begin{align*}
\hat{f}_{j}(t, \xi^{1}(t), \ldots, \xi^{j}(t)) \triangleq f_{j}(t, \xi^{1}(t), \ldots, \xi^{j}(t), \kappa_{j}(\xi^{j}(t))).
\end{align*} 

For a fixed (given) $T$, let us define the exit probability as   
\begin{align}
q^{\epsilon,\ell} \bigl(s, x^{\epsilon,1}, \ldots, x^{\epsilon,\ell}\bigr) = \mathbb{P}_{s, x_{\widehat{1,\ell}}}^{\epsilon} \Bigl\{ \tau_D^{\epsilon,\ell} \le T \Bigr\}, \quad \ell = 2, 3, \ldots, n, \label{Eq4}
\end{align}
where such a probability $\mathbb{P}_{s, x_{\widehat{1,\ell}}}^{\epsilon} \bigl\{\cdot\bigr\}$ is conditioned on the initial points $x_s^{\epsilon,j} \in \mathbb{R}^d$, for $j=1, \ldots \ell$, as well as on the class of admissible controls.\footnote{$\mathbb{P}_{s, x_{\widehat{1,\ell}}}^{\epsilon} \bigl\{\cdot \bigr\} \triangleq \mathbb{P}_{s, x^{\epsilon,1}, \ldots, x^{\epsilon,\ell}}^{\epsilon} \bigl\{\cdot\bigr\}$}

Notice that the backward operator for the diffusion process $\bigl(x^{\epsilon,1}(t), \ldots, x^{\epsilon,\ell}(t)\bigr)$, with $\kappa_{j}(x^{\epsilon,j}(t))$, for $j = 1, \ldots, \ell$ and $\forall t \ge s$, when applied to a certain function $\upsilon^{\epsilon,\ell}\bigl(s, x^{\epsilon,1}, \ldots, x^{\epsilon,\ell}\bigr)$, is given by
\begin{align}
 \upsilon_s^{\epsilon,\ell} + \mathcal{L}^{\epsilon, \ell} \upsilon^{\epsilon,\ell} \triangleq \upsilon_s^{\epsilon,\ell} + \frac{\epsilon}{2} \operatorname{tr}\Bigl \{a\, \upsilon_{x^{\epsilon,1}x^{\epsilon,1}}^{\epsilon,\ell} \Bigr\} + \sum\nolimits_{j=1}^\ell \hat{f}_{j}^T \upsilon_{x^{\epsilon,j}}^{\epsilon,\ell},  \label{Eq5}
\end{align}
for $j = 1, \ldots, \ell$, with $a(s, x^{\epsilon,1})=\sigma(s, x^{\epsilon,1})\,\sigma^T(s, x^{\epsilon,1})$ and 
\begin{align*}
\hat{f}_{j}\bigl(t, x^{\epsilon,1}(t), \ldots, x^{\epsilon,j}(t)) = f_{j}\bigl(t, x^{\epsilon,1}(t), \ldots, x^{\epsilon,j}(t), \kappa_{j}(x^{\epsilon,j}(t))).
\end{align*}

Let $\Omega_{\ell}$ be the open set
\begin{align*}
\Omega_{\ell} = (0, T) \times \mathbb{R}^{(\ell-1) \times d} \times D, \quad \ell =2, 3, \ldots, n.
\end{align*}
Further, let us denote by $C^{\infty}(\Omega_{\ell})$ the spaces of infinitely differentiable functions on $\Omega_{\ell}$, and by $C_0^{\infty}(\Omega_{\ell})$ the space of the functions $\phi \in C^{\infty}(\Omega_{\ell})$ with compact support in $\Omega_{\ell}$. A locally square integrable function $ \upsilon^{\epsilon,\ell}\bigl(s, x^{\epsilon,1}, \ldots, x^{\epsilon,\ell}\bigr)$ on $\Omega_{\ell}$ is said to be a probabilistic solution to the following equation
\begin{align}
 \upsilon_s^{\epsilon,\ell} + \mathcal{L}^{\epsilon, \ell} \upsilon^{\epsilon,\ell} = 0,  \label{Eq6}
\end{align}
if, for any test function $\phi \in C_0^{\infty}(\Omega_{\ell})$, the following holds true
\begin{align}
 \int_{\Omega_{\ell}} \Bigl(-\phi_t + {\mathcal{L}^{\epsilon, \ell}}^{\ast} \phi \Bigr)\upsilon^{\epsilon,\ell} d \Omega_{\ell} = 0, \quad \ell = 2, 3, \ldots, n, \label{Eq7}
\end{align}
where $d \Omega_{\ell}$ denotes the Lebesgue measure on $\mathbb{R}^{(\ell \times d) + 1}$ and ${\mathcal{L}^{\epsilon, \ell}}^{\ast}$ is an adjoint operator corresponding to $\mathcal{L}^{\epsilon, \ell}$
\begin{align}
 {\mathcal{L}^{\epsilon, \ell}}^{\ast} \phi = \frac{\epsilon}{2} \sum\nolimits_{j=1}^d \sum\nolimits_{m=1}^d \bigr(a_{j,m} \phi \bigr)_{x_j^{\epsilon,1} x_m^{\epsilon,1}} - \sum\nolimits_{j=1}^\ell \sum\nolimits_{m=1}^d \bigl(\hat{f}_{j}\phi\bigr)_{x_m^{\epsilon,j}}.  \label{Eq8}
\end{align}

In this paper, we assume that the following statements hold for the chain of distributed control systems in Equation~\eqref{Eq1}.
\begin{assumption} \label{AS1} ~\\\vspace{-10mm}
\begin{enumerate} [(a)]
\item The functions $f_{i}$, for $i = 1, 2, \ldots, n$, are bounded $C^{\infty}(\Omega_{0,i})$-functions, with bounded first derivatives, where $\Omega_{0,i} = (0, \infty) \times \mathbb{R}^{(i \times d)}$. Moreover, $\sigma$ and $\sigma^{-1}$ are bounded $C^{\infty} \bigl((0, \infty) \times \mathbb{R}\bigr)$-functions, with bounded first derivatives.
\item The backward operator in Equation~\eqref{Eq5} is hypoelliptic in $C^{\infty}(\Omega_{0,\ell})$, for each $\ell =2, 3, \ldots, n$ (e.g., see \cite{Hor67} or \cite{Ell73}).
\item For each $\ell = 2, 3, \ldots, n$, let $n(x^{\epsilon,\ell})$ be the outer normal vector to $\partial D$ and, further, let $\Gamma_{\ell}^{+}$ and $\Gamma_{\ell}^{0}$ denote the sets of points $(t, x^{\epsilon,1}, \ldots, x^{\epsilon,\ell})$, with $x^{\epsilon,\ell} \in \partial D$, such that
\begin{align*}
\hat{f}_{\ell}^T (t, x^{\epsilon,1}, \ldots, x^{\epsilon,\ell}) n(x^{\epsilon,\ell})
\end{align*}
is positive and zero, respectively.\footnote{Here, we remark that
\begin{align*}
\mathbb{P}_{s, x_{\widehat{1,\ell}}}^{\epsilon} \Bigl\{ \Bigl( \tau_D^{\epsilon,\ell}, x^{\epsilon,1}, \ldots, x^{\epsilon,\ell}\Bigr) \in \Gamma_{\ell}^{+} \bigcup \Gamma_{\ell}^{0},\,\, \tau_D^{\epsilon,\ell} < \infty \Bigr \} =1, \quad \forall \bigl(s, x^{\epsilon,1}(s), \ldots, x^{\epsilon,\ell}(s)\bigr) \in \Omega_{0,\ell}, \\ \ell =2, 3, \ldots, n.
\end{align*}
Notice that if
\begin{align*}
\mathbb{P}_{s, x_{\widehat{1,\ell}}}^{\epsilon} \Bigl\{ \Bigl(t, x^{\epsilon,1}, \ldots, x^{\epsilon,\ell}\Bigr) \in \Gamma_{\ell}^{0} ~ \text{for some} ~ t \in [s, T] \Bigr \} = 0, \quad \forall \bigl(s, x^{\epsilon,1}(s), \ldots, x^{\epsilon,\ell}(s)\bigr) \in \Omega_{0,\ell},
\end{align*}
and, moreover, if $\tau_D^{\epsilon,\ell} \le T$, then we have $\Bigl( \tau_D^{\epsilon,\ell}, x^{\epsilon,1}(\tau_D^{\epsilon,\ell}), \ldots, x^{\epsilon,\ell}(\tau_D^{\epsilon,\ell}) \Bigr) \in \Gamma^{+}$ almost surely (see \cite[Section~7]{StrVa75}).}
\end{enumerate}
\end{assumption}

\begin{remark}  \label{R2}
Note that, from Assumptions~\ref{AS1}(a)-(b), each matrix $\Bigr(\frac{\partial {f_{\ell}}_i}{\partial x_j^{\epsilon,1}}\Bigl)_{ij}$, for $\ell = 2, 3, \ldots n$, has full rank $d$ everywhere in $\Omega_{0,\ell}$, since the backward operator in Equation~\eqref{Eq5} is hypoelliptic. In particular, the hypoellipticity assumption is related to a strong accessibility property of controllable nonlinear systems that are driven by white noise (e.g., see \cite[Section~3]{Ell73} for further discussion). That is, the hypoellipticity assumption implies that the diffusion process $x^{\epsilon,\ell}(t)$ has a transition probability density $p\bigl(t,(x^{\epsilon,1}, \ldots, x^{\epsilon,\ell}); \mu\bigr)$, which is $C^{\infty}$ on $(0, \infty) \times \mathbb{R}^{2(d \times \ell)}$, and which also satisfies the forward equation $p_t = {\mathcal{L}^{\epsilon, \ell}}^{\ast} p$ (in the variables $(t, \mu))$.
\end{remark}

In Section~\ref{S2}, we present our main result -- where, using the Ventcel-Freidlin asymptotic estimates \cite{VenFre70} (cf. \cite[Chapter~14]{Fre76} or \cite{FreWe84}) and the stochastic control arguments from Fleming \cite{Fle78}, we provide an asymptotic bound on the exit probability $q^{\epsilon,\ell} \bigl(s, x^{\epsilon,1}, \ldots, x^{\epsilon,\ell} \bigr)$, i.e.,
\begin{align*}
 I^{\epsilon,\ell} \bigl(s, x^{\epsilon,1}, \ldots, x^{\epsilon,\ell} \bigr) \rightarrow I^{0,\ell} \bigl(s, x^{\epsilon,1}, \ldots, x^{\epsilon,\ell} \bigr) \quad \text{as} \quad \epsilon \rightarrow 0,
\end{align*}
where
\begin{align*}
I^{\epsilon,\ell} \bigl(s, x^{\epsilon,1}, \ldots, x^{\epsilon,\ell} \bigr) = -\epsilon \log q^{\epsilon,\ell} \bigl(s, x^{\epsilon,1}, \ldots, x^{\epsilon,\ell} \bigr), \quad \ell = 2, 3, \ldots, n.
\end{align*}
Such an asymptotic estimate for $I^{\epsilon,\ell} \bigl(s, x^{\epsilon,1}, \ldots, x^{\epsilon,\ell} \bigr)$ relies on the interpretation of the exit probability function as a value function for a family of stochastic control problems that can be associated with the underlying chain of distributed control systems. Finally, we provide concluding remarks in Section~\ref{S3}.

Before concluding this section, it is worth mentioning that some interesting studies on the asymptotic behavior of exit probabilities for dynamical systems with small random perturbations have been reported in literature (for example, see \cite{Hern81}, \cite{DelM10} or \cite{StrVa75} in the context of estimating density functions for degenerate diffusions; see \cite{She91} or \cite{Day87} in the context of nondegenerate diffusions; and see also \cite{BisB11} in the context of exit-time and invariant measure for small noise constrained diffusions).

\section{Main Results} \label{S2}
\subsection{The exit probabilities} \label{S2(1)}
Let $(x^{\epsilon,1}(t), x^{\epsilon,2}(t), \ldots, x^{\epsilon,\ell}(t))$, with $\ell = 2, 3, \ldots, n$, for $0 \le t \le T$, be the diffusion process. Further, let us consider the following boundary value problem
\begin{align}
\left.\begin{array}{c}
\upsilon_s^{\epsilon,\ell} + \mathcal{L}^{\epsilon, \ell} \upsilon^{\epsilon,\ell} = 0 \quad \text{in} \quad \Omega_{\ell} = (0, T) \times \mathbb{R}^{(\ell-1) \times d} \times D  \\
 \upsilon^{\epsilon,\ell} \bigl(s, x^{\epsilon,1}, \ldots, x^{\epsilon,\ell} \bigr) = 1 \quad \text{on} \quad \Gamma_{T, \ell}^{+}  \\
 \upsilon^{\epsilon,\ell} \bigl(s, x^{\epsilon,1}, \ldots, x^{\epsilon,\ell} \bigr) = 0 \quad \text{on} \quad \{T\} \times \mathbb{R}^{(\ell-1) \times d} \times D
\end{array}\right\}   \label{Eq9}
\end{align}
where $\mathcal{L}^{\epsilon, \ell}$ is the backward operator in Equation~\eqref{Eq5} and
\begin{align*}
 \Gamma_{T, \ell}^{+} = \Bigl\{\bigl(s, x^{\epsilon,1}, \ldots, x^{\epsilon,\ell} \bigr) \in \Gamma_{\ell}^{+}\, \bigl \vert \, 0 < s \le T \Bigr\}.  
\end{align*}
Let $\Omega_{0T,\ell}$ be the set consisting of $\Omega_{\ell} \bigcup \{T\} \times \mathbb{R}^{(\ell-1) \times d} \times D$, together with the boundary points $\bigl(s, x^{\epsilon,1}, \ldots, x^{\epsilon,\ell} \bigr) \in \Gamma_{\ell}^{+}$, with $0< s<T$. Then, the following proposition provides a solution to the exit probability $\mathbb{P}_{s, x_{\widehat{1,\ell}}}^{\epsilon} \Bigl\{ \tau_D^{\epsilon,\ell} \le T \Bigr\}$, for each $\ell = 2, 3, \ldots, n$, with which the diffusion process $x^{\epsilon,\ell}(t)$ exits from the domain $D$.
 
\begin{proposition} \label{P1}
Suppose that the statements~(a)-(c) in the above assumption (i.e., Assumption~\ref{AS1}) hold true. Then, the exit probability $q^{\epsilon,\ell} (s, x^{\epsilon,1}, \ldots, x^{\epsilon,\ell}) = \mathbb{P}_{s, x_{\widehat{1,\ell}}}^{\epsilon} \bigl\{ \tau_D^{\epsilon,\ell} \le T \bigr\}$ is a smooth solution to the boundary value problem in Equation~\eqref{Eq9}, and, moreover, it is a continuous function on $\Omega_{0T,\ell}$.
\end{proposition}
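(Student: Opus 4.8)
The plan is to realize $q^{\epsilon,\ell}$ as a probabilistic solution of the boundary value problem~\eqref{Eq9} and then to upgrade its regularity by hypoellipticity. First I would note that, being a probability, $0 \le q^{\epsilon,\ell} \le 1$, so $q^{\epsilon,\ell}$ is bounded and in particular locally square integrable on $\Omega_\ell$. By the strong Markov property of the diffusion $(x^{\epsilon,1}(t), \ldots, x^{\epsilon,\ell}(t))$, conditioning on $\mathcal{F}_t$ for $s < t$ splits the event $\{\tau_D^{\epsilon,\ell} \le T\}$ according to whether the process has already exited by time $t$, which yields $q^{\epsilon,\ell}(s, \cdot) = \mathbb{P}_{s,\cdot}^{\epsilon}\{\tau_D^{\epsilon,\ell} \le t\} + \mathbb{E}_{s,\cdot}^{\epsilon}[\mathbf{1}_{\{\tau_D^{\epsilon,\ell} > t\}}\, q^{\epsilon,\ell}(t, x^{\epsilon,1}(t), \ldots, x^{\epsilon,\ell}(t))]$; equivalently, $q^{\epsilon,\ell}$ evaluated along the path and stopped at $\tau_D^{\epsilon,\ell} \wedge T$ is a martingale. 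Testing this identity against $\phi \in C_0^{\infty}(\Omega_\ell)$ and applying Dynkin's formula --- or, alternatively, writing the exit probability through the smooth transition density $p$ of Remark~\ref{R2}, which satisfies the forward equation $p_t = {\mathcal{L}^{\epsilon,\ell}}^{\ast} p$, and integrating by parts --- produces exactly Equation~\eqref{Eq7}. Hence $q^{\epsilon,\ell}$ is a probabilistic solution of Equation~\eqref{Eq6}.

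The decisive regularity step is then Assumption~\ref{AS1}(b). Since the operator $\partial_s + \mathcal{L}^{\epsilon,\ell}$ is hypoelliptic on $\Omega_{0,\ell}$, every locally square-integrable distributional solution of the homogeneous equation is automatically $C^{\infty}$ in the interior of $\Omega_\ell$; H\"{o}rmander's theorem therefore promotes $q^{\epsilon,\ell}$ to a genuine smooth solution of $\upsilon_s^{\epsilon,\ell} + \mathcal{L}^{\epsilon,\ell}\upsilon^{\epsilon,\ell} = 0$ there. This is precisely where the degeneracy of the diffusion is absorbed: the second-order part of $\mathcal{L}^{\epsilon,\ell}$ acts only in $x^{\epsilon,1}$, so classical parabolic interior regularity is unavailable, but the bracket (H\"{o}rmander) condition encoded in Assumption~\ref{AS1}(b) delivers the same smoothness.

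The boundary data are read off directly from the definition of $\tau_D^{\epsilon,\ell}$. On the terminal face $\{T\} \times \mathbb{R}^{(\ell-1)\times d} \times D$ one starts at $s = T$ strictly inside $D$, so $\tau_D^{\epsilon,\ell} > T$ and $q^{\epsilon,\ell} = 0$. On $\Gamma_{T,\ell}^{+}$ the component $x^{\epsilon,\ell}$ sits on $\partial D$ with $\hat{f}_\ell^{T} n(x^{\epsilon,\ell}) > 0$, so the deterministic drift of the $\ell$th subsystem points strictly outward, the process leaves $D$ immediately, and $\tau_D^{\epsilon,\ell} = s \le T$, giving $q^{\epsilon,\ell} = 1$.

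I expect the genuine obstacle to be the continuity of $q^{\epsilon,\ell}$ up to the lateral boundary $\Gamma_{T,\ell}^{+}$ on $\Omega_{0T,\ell}$. Because $x^{\epsilon,\ell} \in \partial D$ is a characteristic face for $\mathcal{L}^{\epsilon,\ell}$ (no diffusion acts in the $x^{\epsilon,\ell}$ direction), interior smoothness says nothing about boundary behaviour, and the exit at this face is governed solely by the drift $\hat{f}_\ell$. The argument must show that as a starting point approaches a point of $\Gamma^{+}$ the exit time $\tau_D^{\epsilon,\ell}$ tends to $s$ in probability --- forcing $q^{\epsilon,\ell} \to 1$ continuously --- and the transversality $\hat{f}_\ell^{T} n(x^{\epsilon,\ell}) > 0$ is what guarantees that nearby trajectories still cross $\partial D$ almost at once. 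The delicate point is to discard the tangential set $\Gamma^{0}$, and this is exactly what the footnote to Assumption~\ref{AS1}(c) supplies: the process reaches $\partial D$ only through $\Gamma^{+} \cup \Gamma^{0}$ and charges $\Gamma^{0}$ with probability zero. I would make this precise by following the characteristic-boundary analysis of Stroock and Varadhan in \cite[Section~7]{StrVa75}.
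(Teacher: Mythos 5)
Your proposal is correct in outline, but it proves the central step by a genuinely different route than the paper. The paper never invokes the Markov property directly: it regularizes the degenerate system by adding independent noises $\sqrt{\delta_j}\,dW_j(t)$ to the subsystems $j=2,\ldots,\ell$ (Equation~\eqref{Eq10}), so that the associated backward operator (Equation~\eqref{Eq12}) becomes uniformly parabolic; it then solves that nondegenerate problem classically with a sequence of \emph{continuous} boundary data $\psi_k$ approximating the discontinuous indicator data of Equation~\eqref{Eq9}, and passes to the double limit $\delta_j \to 0$ (using Lemma~\ref{L1} --- almost-sure convergence of paths, exit times and exit points, which is where Assumption~\ref{AS1}(c) enters --- together with dominated convergence) and then $k \to \infty$ to conclude that $q^{\epsilon,\ell}$ is a distributional solution; hypoellipticity then upgrades it to a smooth one, exactly as in your final step. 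Your route --- deriving the weak formulation~\eqref{Eq7} directly from the strong Markov/martingale property of $q^{\epsilon,\ell}$ along the stopped process --- is viable and more economical, since it avoids the two-parameter approximation; the observation that makes it rigorous (and which you should state explicitly) is that the $\ell$th component moves with bounded speed ($\hat{f}_{\ell}$ is bounded), so for a test function $\phi$ supported at positive distance from $\partial D$ and a short time increment $h$ the absorbed and free processes coincide, and pairing the martingale identity with $\phi$ then passes to the adjoint $-\phi_t + {\mathcal{L}^{\epsilon,\ell}}^{\ast}\phi$ as $h \to 0$. One caveat: your parenthetical alternative of ``writing the exit probability through the smooth transition density $p$ of Remark~\ref{R2}'' is not correct as stated, because $q^{\epsilon,\ell}$ is a functional of the process absorbed at $\partial D$, whose sub-probability kernel is not the free density $p$; the free density becomes usable only after the localization just described. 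What the paper's longer construction buys in exchange is the boundary behavior: continuity on $\Omega_{0T,\ell}$ and the attainment of the data on $\Gamma_{T,\ell}^{+}$ and on the terminal face come out of the approximation scheme itself, whereas in your version these must be argued separately --- which you correctly flag and defer to the Stroock--Varadhan characteristic-boundary analysis, much as the paper's own footnote does.
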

In order to prove the above proposition, we consider the following nondegenerate diffusion process $(x^{\epsilon,1}(t), x^{\delta_2, \epsilon,2}(t), \ldots, x^{\delta_\ell, \epsilon,\ell}(t))$ satisfying
\begin{align}
\left.\begin{array}{l}
d x^{\epsilon,1}(t) = \hat{f}_1\bigl(t, x^{\epsilon,1}(t)\bigr) dt + \sqrt{\epsilon} \sigma\bigl(t, x^{\epsilon,1}(t))dW(t) \\
d x^{\delta_\ell,\epsilon,\ell}(t) = \hat{f}_{\ell}\bigl(t, x^{\epsilon,1}(t), x^{\delta_2, \epsilon,2}(t), \ldots, x^{\delta_\ell,\epsilon,\ell}(t)\bigr) dt + \sqrt{\delta_\ell} dW_\ell(t) dt, \\
\quad \quad\quad\quad\quad\quad\quad\quad \quad\quad\quad\quad\quad\quad\quad  \delta_\ell >0,  \,\,  s \le t \le T
\end{array}\right\} \label{Eq10}
\end{align}
for $\ell = 2, 3, \ldots, n$, with an initial condition
\begin{align}
  \bigl(x^{\epsilon,1}(s), x^{\delta_2, \epsilon,2}(s), \ldots, x^{\delta_\ell, \epsilon,\ell}(s)\bigr) = \bigl(x_s^{\epsilon,1}, x_s^{\epsilon,2}, \ldots, x_s^{\epsilon,\ell}\bigr).  \label{Eq11}
\end{align}
Moreover, $W_j(\cdot)$, for $j = 2, \ldots, \ell$, are $d$-dimensional standard Wiener processes (with $W_\ell(0)=0$) and independent to $W(\cdot)$.

Let $\tau_D^{\delta,\epsilon,\ell}=\tau_D^{\delta,\epsilon,{\ell}}(s, x^{\epsilon,1}, x^{\delta_2, \epsilon,2}, \ldots, x^{\delta_\ell,\epsilon,\ell})$ be the first exit-time for the diffusion process $x^{\delta_\ell, \epsilon,\ell}(t)$ (corresponding to the $\ell$th-subsystem, with a nondegenerate case) from the domain $D$. Later, we relate the exit probability of this diffusion process with that of the boundary value problem in Equation~\eqref{Eq9} as the limiting case, when $\delta_{\ell} \rightarrow 0$, for $\ell = 2,3, \dots, n$. 

Next, let us define the following
\begin{align*}
\begin{array}{c}
   \theta = \tau_D^{\epsilon,\ell} \wedge T, \quad\quad  \theta^{\delta} = \tau_D^{\delta,\epsilon,\ell} \wedge T, \\
   \bigl\Vert x^{\delta_\ell, \epsilon,\ell} - x^{\epsilon,\ell} \bigr\Vert_t = \sup\limits_{s \le r \le t} \Bigl\vert x^{\delta_\ell, \epsilon,\ell}(r) - x^{\epsilon,\ell}(r) \Bigr\vert,\quad \ell = 2, 3, \ldots, n.
\end{array}
\end{align*}

Then, we need the following lemma, which is useful for proving the above proposition.
\begin{lemma} \label{L1}
Suppose that $\epsilon > 0$ is fixed. Then, for any initial point $(x^{\epsilon,1}, \ldots, x^{\epsilon,\ell}) \in \mathbb{R}^{(\ell-1) \times d} \times D$, with $t > s$, the following statements hold true
\begin{enumerate} [(i)]
\item $\bigl\Vert x^{\delta_\ell, \epsilon,\ell} - x^{\epsilon,\ell} \bigr\Vert_t  \rightarrow 0$, 
\item $\theta^{\delta} \rightarrow \theta$, and 
\item $\bigl(x^{\delta_2,\epsilon,2}(\theta^{\delta}), \ldots, x^{\delta_\ell,\epsilon,\ell}(\theta^{\delta})\bigr) \rightarrow \bigl(x^{\epsilon,2}(\theta), \ldots, x^{\epsilon,\ell}(\theta)\bigr)$, 
\end{enumerate}
almost surely, as $\delta_\ell \rightarrow 0$, for $\ell = 2, 3, \ldots, n$.
\end{lemma}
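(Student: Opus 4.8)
The plan is to establish the three statements in order, obtaining (ii) and (iii) as consequences of the pathwise convergence (i) together with the transversality of the exit that is built into Assumption~\ref{AS1}(c). The crucial structural observation is that the first subsystem $x^{\epsilon,1}(t)$ is \emph{identical} in the degenerate process \eqref{Eq1} and in the nondegenerate process \eqref{Eq10}: it carries the same $\epsilon$ and the same Wiener process $W(\cdot)$ and has no $\delta$-dependence. Consequently the discrepancy between the two chains is generated purely by the artificial noises $\sqrt{\delta_j}\,dW_j$ added to the downstream subsystems $j=2,\ldots,\ell$, and it propagates forward along the chain through the Lipschitz drifts $\hat f_j$. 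Throughout I fix $\epsilon>0$ and argue pathwise, for a.e.\ $\omega$.

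For (i), I would set $e_j(t)=x^{\delta_j,\epsilon,j}(t)-x^{\epsilon,j}(t)$ for $j=2,\ldots,\ell$ and write, using that the common first coordinate cancels inside $\hat f_j$,
\begin{align*}
 e_j(t)=\int_s^t\Bigl[\hat f_j\bigl(r,x^{\epsilon,1},x^{\delta_2,\epsilon,2},\ldots,x^{\delta_j,\epsilon,j}\bigr)-\hat f_j\bigl(r,x^{\epsilon,1},x^{\epsilon,2},\ldots,x^{\epsilon,j}\bigr)\Bigr]dr+\sqrt{\delta_j}\,W_j(t).
\end{align*}
Invoking the uniform Lipschitz property of $\hat f_j$ (with constant $K$), taking suprema over $[s,t]$, summing over $j=2,\ldots,\ell$, and applying Gr\"onwall's inequality yields
\begin{align*}
 \sum_{j=2}^{\ell}\sup_{s\le r\le t}\bigl|e_j(r)\bigr|\le\Bigl(\sum_{j=2}^{\ell}\sqrt{\delta_j}\,\sup_{s\le r\le T}\bigl|W_j(r)\bigr|\Bigr)\,e^{K(\ell-1)(T-s)}.
\end{align*}
Since each $W_j(\cdot)$ has a.s.\ continuous sample paths, $\sup_{[s,T]}|W_j|$ is a.s.\ finite, so the right-hand side tends to $0$ a.s.\ as $\delta_\ell\to0$ (i.e.\ as the whole vector $(\delta_2,\ldots,\delta_\ell)\to0$). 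This gives $\bigl\Vert x^{\delta_\ell,\epsilon,\ell}-x^{\epsilon,\ell}\bigr\Vert_t\to0$ a.s.

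Statement (ii) is where I expect the main obstacle, because exit times are generically discontinuous functionals of the path; what rescues continuity here is precisely the transversality guaranteed by Assumption~\ref{AS1}(c). By the footnote there, a.s.\ the exit occurs through $\Gamma_\ell^{+}$ and the set $\Gamma_\ell^{0}$ is avoided, so at $t=\tau_D^{\epsilon,\ell}$ one has $\hat f_\ell^{T}n(x^{\epsilon,\ell})>0$, i.e.\ the (noise-free) downstream trajectory crosses $\partial D$ strictly outward. I would prove $\theta^\delta\to\theta$ by a two-sided semicontinuity argument. For the lower bound, fix small $\eta>0$; on $[s,\tau_D^{\epsilon,\ell}-\eta]$ (or on all of $[s,T]$ on the no-exit event $\{\tau_D^{\epsilon,\ell}>T\}$) the continuous path $x^{\epsilon,\ell}$ lies in a compact subset of the open set $D$, hence at a positive distance $d_\eta>0$ from $\partial D$; by (i) the perturbed path is within $d_\eta/2$ of it for $\delta$ small, so it too stays inside $D$, giving $\theta^\delta\ge\theta-\eta$. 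For the upper bound, transversality forces $x^{\epsilon,\ell}$ to be strictly outside $\overline D$ by a definite margin at some time $\tau_D^{\epsilon,\ell}+\eta$; again by (i) the perturbed path is then also outside, so $\theta^\delta\le\theta+\eta$. Letting $\eta\downarrow0$ yields $\theta^\delta\to\theta$ a.s. The delicate points to handle carefully are the $\Gamma_\ell^{0}$-avoidance (to exclude tangential contacts, where the exit time would be unstable) and the borderline cases $\tau_D^{\epsilon,\ell}=T$ and $\tau_D^{\epsilon,\ell}>T$.

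Finally, (iii) follows routinely from (i) and (ii). For each $j=2,\ldots,\ell$ I would use the triangle inequality
\begin{align*}
 \bigl|x^{\delta_j,\epsilon,j}(\theta^\delta)-x^{\epsilon,j}(\theta)\bigr|\le\bigl|x^{\delta_j,\epsilon,j}(\theta^\delta)-x^{\epsilon,j}(\theta^\delta)\bigr|+\bigl|x^{\epsilon,j}(\theta^\delta)-x^{\epsilon,j}(\theta)\bigr|,
\end{align*}
bounding the first term by $\bigl\Vert x^{\delta_j,\epsilon,j}-x^{\epsilon,j}\bigr\Vert_T\to0$ via (i), and the second term by the uniform continuity of the continuous path $x^{\epsilon,j}$ on the compact interval $[s,T]$ together with $\theta^\delta\to\theta$ from (ii). Both terms vanish a.s.\ as $\delta_\ell\to0$, which establishes (iii) and completes the lemma.
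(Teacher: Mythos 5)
Your proposal is correct and follows essentially the same route as the paper: a Gr\"onwall--Bellman estimate for (i), a two-sided exit-time argument for (ii) that uses openness of $D$ for the lower bound and the transversality in Assumption~\ref{AS1}(c) (the event that the path leaves a definite distance outside $\overline{D}$ shortly after $\theta$) for the upper bound, and (iii) as a consequence of (i)--(ii) via path continuity. If anything, your treatment of (i) is slightly more careful than the paper's, since you propagate the errors $e_j$ through all intermediate coordinates $j=2,\ldots,\ell$ before applying Gr\"onwall, whereas the paper's displayed inequality tacitly ignores the $\delta_j$-perturbations of the coordinates between $1$ and $\ell$.
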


\begin{proof} 
Part~(i): Note that, for a fixed $\epsilon > 0$ and $\ell \in \{ 2, 3, \ldots, n\}$, the following inequality holds 
\begin{align*}
\bigl\vert x^{\delta_\ell, \epsilon,\ell}(r) - x^{\epsilon,\ell}(r) \bigr\vert &\le  \int_{s}^t \Bigl \vert \hat{f}_{\ell}\bigl(r, x^{\epsilon,1}(r), \ldots, x^{\delta_\ell,\epsilon,\ell}(r)\bigr) - \hat{f}_{\ell}\bigl(r, x^{\epsilon,1}(r), \ldots, x^{,\epsilon,\ell}(r)\bigr) \Bigr\vert dr \notag \\
& \quad \quad\quad\quad\quad\quad + \sqrt{\delta_{\ell}} \vert W_{\ell}(t)\vert, \notag \\
& \le C \int_{s}^t \bigl \vert x^{\delta_\ell, \epsilon,\ell}(r) - x^{\epsilon,\ell}(r) \bigr\vert + \sqrt{\delta_{\ell}} \vert W_{\ell}(t)\vert,
\end{align*}
such that
\begin{align*}
\bigl \Vert x^{\delta_\ell, \epsilon,\ell}(r) - x^{\epsilon,\ell}(r) \bigr \Vert_t \le C \int_{s}^t \bigl \vert x^{\delta_\ell, \epsilon,\ell}(r) - x^{\epsilon,\ell}(r) \bigr\vert + \sqrt{\delta_{\ell}} \vert W_{\ell}(t)\vert,
\end{align*}
where $C$ is a Lipschitz constant. Using the Gronwall-Bellman inequality, we obtain the following
\begin{align*}
\bigl \Vert x^{\delta_\ell, \epsilon,\ell}(r) - x^{\epsilon,\ell}(r) \bigr \Vert_t \le c \sqrt{\delta_{\ell}} \bigl\Vert W_{\ell} \bigr\Vert_t,
\end{align*}
where $c$ is a constant that depends on $C$ and $(t-s)$. Hence, we have 
\begin{align*}
\bigl \Vert x^{\delta_\ell, \epsilon,\ell}(r) - x^{\epsilon,\ell}(r) \bigr \Vert_t  \rightarrow 0 \quad \text{as} \quad \delta_\ell \rightarrow 0,
\end{align*}
for each $\ell = 2, 3, \ldots, n$.

Part~(ii): Next, let us show $\theta$ satisfies the following bounds
\begin{align*}
\theta^{\ast} \le \theta \le \theta_{\ast},
\end{align*}
almost surely, where $\theta^{\ast} =\limsup_{\delta_\ell \rightarrow 0} \theta^{\delta}$ and $\theta_{\ast} =\liminf_{\delta_\ell \rightarrow 0} \theta^{\delta}$, for $\ell = 2, 3, \ldots, n$. 

Notice that $D$ is open, then it follows from Part~(i) that if $\theta = \tau_D^{\epsilon,\ell} \wedge T=T$, then $\theta^{\delta} = T$, almost surely, for all $\delta_{\ell}$ sufficiently small. Then, we will get Part~(ii). Similarly, if $\theta^{\delta} = T$ and $x^{\delta, \epsilon,\ell}(\theta^{\delta}) \in D$, then the statement in Part~(i) implies Part~(ii). Then, we can assume that $x^{\epsilon,\ell}(\theta) \in \partial D$ and $x^{\delta, \epsilon,\ell}(\theta^{\delta}) \in \partial D$. Moreover, if $x^{\delta, \epsilon,\ell}(\theta^{\delta}) \in \partial D$, then, from Part~(i), $x^{\epsilon,\ell}(\theta_{\ast}) \in \partial D$, almost surely, and, consequently, $\theta_{\ast} \ge \theta$, almost surely.

For the case $\theta^{\ast} \le \theta$, let us define an event $\Psi_{a,\alpha}$ (with $a >0$ and $\alpha > 0$) as follow: there exists $t \in [\theta, \theta + a]$ such that the distance $\varrho \bigl(x^{\epsilon,\ell}(t), D\cup \partial D \bigr) \ge \alpha$. Notice that if this holds together with $\bigl\Vert x^{\delta_\ell, \epsilon,\ell} - x^{\epsilon,\ell} \bigr\Vert_t < \alpha$, then we have $\theta^{\delta} < \theta + a$. Hence, from Part~(i), we have $\theta^{\ast} < \theta + a$ on $\Psi_{a,\alpha}$, almost surely.

On the other hand, from Assumption~\ref{AS1}(c), we have the following
\begin{align*}
\mathbb{P}_{s, x_{\widehat{1,\ell}}}^{\epsilon} \Bigl\{ \bigcup\nolimits_{\alpha >0} \Psi_{a,\alpha}\Bigr\} = 1.
\end{align*}
Then, 
\begin{align*}
\mathbb{P}_{s, x_{\widehat{1,\ell}}}^{\epsilon} \Bigl\{ \theta^{\ast} < \theta + a\Bigr\} = 1,
\end{align*}
since $a$ is an arbitrary, we obtain $\theta^{\ast} < \theta$, almost surely.

Finally, notice that the statement in Part~(iii) is a consequence of Part~(i) and Part~(ii). This completes the proof of Lemma~\ref{L1}. 
\end{proof}

\begin{proof} [Proof of Proposition~\ref{P1}]
Note that, from Assumption~\ref{AS1}(c), it is sufficient to show that $q^{\epsilon,\ell} (s, x^{\epsilon,1}, \ldots, x^{\epsilon,\ell})$ is a smooth solution (almost everywhere in $\Omega_{\ell}$ with respect to Lebesgue measure) to the boundary value problem in Equation~\eqref{Eq9}.

For a fixed $\ell \in \{ 2, 3, \ldots, n\}$, consider the following backward operator which corresponds to the nondegenerate diffusion process $(x^{\epsilon,1}(t), x^{\delta_2, \epsilon,2}(t), \ldots, x^{\delta_\ell, \epsilon,\ell}(t))$
\begin{align}
 \upsilon_s^{\delta,\epsilon,\ell} + \mathcal{L}^{\epsilon, \ell} \upsilon^{\delta,\epsilon,\ell} + \sum\nolimits_{j=2}^\ell \frac{\delta_j}{2} \triangle_{x^{\delta_j, \epsilon,j}} \upsilon^{\delta,\epsilon,\ell}= 0 \quad \text{in} \quad \Omega_{\ell} = (0, T) \times \mathbb{R}^{(\ell-1) \times d} \times D,  \label{Eq12}
\end{align}
where $\triangle_{x^{\delta_j, \epsilon,j}}$ is the Laplace operator in the variable $x^{\delta_j, \epsilon,j}$ and $\mathcal{L}^{\epsilon, \ell}$ is the backward operator in Equation~\eqref{Eq5}.

Next, define $\partial^{\ast} D$ as $\{T\} \times \mathbb{R}^{(\ell-1) \times d} \times D$, together with the boundary points $\bigl(s, x^{\epsilon,1}, x^{\delta_2, \epsilon,2},\\ \ldots, x^{\delta_\ell, \epsilon,\ell} \bigr) \in \Gamma_{\ell}^{+}$, with $0 < s <T$. Let $\psi \bigl(s, x^{\epsilon,1}, x^{\delta_2, \epsilon,2}, \ldots, x^{\delta_\ell, \epsilon,\ell} \bigr)$ be a function which is continuous on $\partial D$. Note that, from Assumption~\ref{AS1}(c), the backward operator in Equation~\eqref{Eq12} is uniformly parabolic and, therefore, its solution satisfies the following boundary condition
\begin{align}
 \upsilon^{\delta,\epsilon,\ell} \bigl(s, x^{\epsilon,1}, x^{\delta_2, \epsilon,2}, \ldots, x^{\delta_\ell, \epsilon,\ell} \bigr) = \psi \bigl(s, x^{\epsilon,1}, x^{\delta_2, \epsilon,2}, \ldots, x^{\delta_\ell, \epsilon,\ell} \bigr) \quad \text{on} \quad \partial^{\ast} D,  \label{Eq13}
\end{align}
where
\begin{align}
 \upsilon^{\delta,\epsilon,\ell} \bigl(s, x^{\epsilon,1},x^{\delta_2, \epsilon,2}, \ldots, x^{\delta_\ell, \epsilon,\ell} \bigr) = \mathbb{E}_{s, x_{\widehat{1,\ell}}}^{\delta, \epsilon} \Bigl\{ \psi \bigl(\theta^{\delta}, x^{\epsilon,1}, x^{\delta_2, \epsilon,2}, \ldots, x^{\delta_\ell, \epsilon,\ell} \bigr) \Bigr\},  \label{Eq14}
\end{align}
with $\theta^{\delta} = \tau_D^{\delta,\epsilon,\ell} \wedge T$.

In particular, let $\psi_k$, with $k=1, 2,\ldots$, be a sequence of bounded functions that are continuous on $\partial^{\ast} D$ and satisfying the following conditions
\begin{align*}
 \psi_k \bigl(s, x^{\epsilon,1}, x^{\delta_2, \epsilon,2}, \ldots, x^{\delta, \epsilon,\ell} \bigr) = \left\{\begin{array}{l l}
1  &\text{if} \, \bigl(s, x^{\epsilon,1}, x^{\delta_2, \epsilon,2}, \ldots, x^{\delta_\ell, \epsilon,\ell} \bigr) \in \Gamma_{T, \ell}^{+}\\
0 &\text{if} \,  \bigl(s, x^{\epsilon,1}, x^{\delta_2, \epsilon,2}, \ldots, x^{\delta_\ell, \epsilon,\ell} \bigr) \in \{T\} \times \mathbb{R}^{(\ell-1) \times d}  \\
&  \quad \quad\quad\quad\quad\quad\quad\quad \times D \quad \text{and} \quad \varrho\bigl(x^{\delta, \epsilon,\ell}, \partial D\bigr) > \frac{1}{k}
\end{array}\right. 
\end{align*}
and
\begin{align*}
0 \le \psi_k \bigl(s, x^{\epsilon,1}, x^{\delta_2, \epsilon,2}, \ldots, x^{\delta_\ell, \epsilon,\ell} \bigr) \le 1 \,\, & \text{if} \,\,  \bigl(s, x^{\epsilon,1}, x^{\delta_2, \epsilon,2}, \ldots, x^{\delta, \epsilon,\ell} \bigr) \in \{T\} \times \mathbb{R}^{(\ell-1) \times d}\\
 & \quad\quad\quad\quad\quad\quad  \times D \quad \text{and} \quad  \varrho\bigl(x^{\delta, \epsilon,\ell}, \partial D\bigr) \le \tfrac{1}{k}.
\end{align*}
Moreover, such bounded functions further satisfy the following
\begin{align}
\bigl\vert \psi_k - \psi_l \bigr\vert \rightarrow 0 \quad \text{as} \quad k,l \rightarrow \infty  \label{Eq15}
\end{align}
uniformly on any compact subset of $\bar{\Omega}_{\ell}$. Then, with $\psi = \psi_k$,
\begin{align*}
 \upsilon_k^{\delta,\epsilon,\ell} \bigl(s, x^{\epsilon,1}, x^{\delta_2, \epsilon,2}, \ldots, x^{\delta, \epsilon,\ell} \bigr) = \mathbb{E}_{s, x_{\widehat{1,\ell}}}^{\delta, \epsilon} \Bigl\{ \psi_k \bigl(\theta^{\delta}, x^{\epsilon,1}, x^{\delta_2, \epsilon,2}, \ldots, x^{\delta, \epsilon,\ell} \bigr) \Bigr\}
\end{align*}
satisfies Equation~\eqref{Eq13} and Equation~\eqref{Eq14}. Then, from the continuity of $\psi_k$ (cf. Lemma~\ref{L1}, Parts~(i)-(iii)) and the Lebesgue's dominated convergence theorem (see \cite[Chapter~4]{Roy88}), we have the following
\begin{align}
 \upsilon_k^{\delta,\epsilon,\ell} \bigl(s, x^{\epsilon,1}, x^{\delta_2, \epsilon,2}, \ldots, x^{\delta, \epsilon,\ell} \bigr) \rightarrow \underbrace{\mathbb{E}_{s, x_{\widehat{1,\ell}}}^{\delta, \epsilon} \Bigl\{ \psi_k \bigl(\theta^{\delta}, x^{\epsilon,1}, x^{\delta_2, \epsilon,2}, \ldots, x^{\delta, \epsilon,\ell} \bigr) \Bigr\}}_
{\triangleq  q_k^{\epsilon,\ell} \bigl(s, x^{\epsilon,1}, \ldots, x^{\epsilon,\ell} \bigr)} \label{Eq16}
\end{align}
as $\delta_\ell \rightarrow 0$, for $\ell = 2, 3, \ldots, n$, with $\theta = \tau_D^{\epsilon,\ell} \wedge T$. Furthermore, in the above equation, $\bigl(x^{\epsilon,1}(t), \ldots, x^{\epsilon,\ell}(t) \bigr)$ is a solution to Equation~\eqref{Eq10}, when $\delta_\ell=0$, for $\ell=2, 3, \ldots, n$, with an initial condition of Equation~\eqref{Eq11}.
 
Notice that $q_k^{\epsilon,\ell} \bigl(s, x^{\epsilon,1}, \ldots, x^{\epsilon,\ell} \bigr)$ satisfies the backward operator in Equation~\eqref{Eq12}, with $\upsilon^{\delta,\epsilon,\ell} =  \upsilon_k^{\delta,\epsilon,\ell}$, and, in addition, it is a distribution solution to the boundary value problem in Equation~\eqref{Eq9}, i.e.,
\begin{align*}
\int_{\Omega_{\ell}} \Bigl(-\phi_t + {\mathcal{L}^{\epsilon, \ell}}^{\ast} \phi \Bigr) q_k^{\epsilon,\ell} d \Omega_{\ell}, &= \lim_{\delta_j \rightarrow 0,\,\forall j \in \{2,\ldots,\ell\}}\int_{\Omega_{\ell}} \Bigl(-\phi_t + {\mathcal{L}^{\epsilon, \ell}}^{\ast} \phi \\
&\quad\quad\quad\quad \quad\quad\quad+ \sum\nolimits_{j=2}^\ell \frac{\delta_j}{2} \triangle_{x^{\delta_j, \epsilon,j}} \phi \Bigr) \upsilon_k^{\delta,\epsilon,\ell} d \Omega_{\ell}, \\
  &=0, 
\end{align*}
for any test function $\phi \in C_0^{\infty}(\Omega_{\ell})$. 

Finally, notice that 
\begin{align*}
q^{\epsilon,\ell} \bigl(s, x^{\epsilon,1}, \ldots, x^{\epsilon,\ell} \bigr) = \lim_{k \rightarrow \infty} q_k^{\epsilon,\ell} \bigl(s, x^{\epsilon,1}, \ldots, x^{\epsilon,\ell} \bigr),
\end{align*}
almost everywhere in $\Omega_{\ell}$. From Assumption~\ref{AS1}(b) (i.e., the hypoellipticity), $q^{\epsilon,\ell} \bigl(s, x^{\epsilon,1}, \ldots, \\ x^{\epsilon,\ell} \bigr)$ is a smooth solution to Equation~\eqref{Eq9} (almost everywhere) in $\Omega_{\ell}$ and continuous on the boundary of $\Omega_{0T,\ell}$. This completes the proof of Proposition~\ref{P1}. 
\end{proof}

\begin{remark} \label{R3}
Here, we remark that the statements in Proposition~\ref{P1} will make sense only if we require the following
\begin{align*}
\tau_D^{\epsilon,1}\, \ge\, \tau_D^{\epsilon,2}\, \ge \,\cdots \,\ge \, \tau_D^{\epsilon,\ell}, 
\end{align*}
where $\tau_D^{\epsilon,1} = \inf \bigl\{ t > s \, \bigl\vert \, x^{\epsilon,1}(t) \in \partial D \bigr\}$. It should be noted that such a condition, in general, depends on the constituting subsystems in Equation~\eqref{Eq1}, the admissible controls from the measurable sets $\prod_{j=1}^{\ell} \mathcal{U}_j$ and the given bounded open domain $D$.
\end{remark}

\subsection{Connection with control problems} \label{S2(2)}

\subsubsection{Deterministic minimum control problems} \label{S2(2a)}

Note that, from Proposition~\ref{P1}, the exit probability $q^{\epsilon,\ell} \bigl(s, x^{\epsilon,1}, \ldots, x^{\epsilon,\ell} \bigr)$ is a smooth solution to the boundary value problem in Equation~\eqref{Eq9}. Further, if we introduce the following logarithmic transformation (e.g., see \cite{Fle78} or \cite{EvaIsh85})
\begin{align}
 I^{\epsilon,\ell} \bigl(s, x^{\epsilon,1}, \ldots, x^{\epsilon,\ell} \bigr) = -\epsilon \log q^{\epsilon,\ell} \bigl(s, x^{\epsilon,1}, \ldots, x^{\epsilon,\ell} \bigr). \label{Eq17}
\end{align}
Then, the function $I^{\epsilon,\ell} \bigl(s, x^{\epsilon,1}, \ldots, x^{\epsilon,\ell} \bigr)$ satisfies the following boundary value problem
\begin{align}
\left.\begin{array}{c}
 I_s^{\epsilon,\ell} + \mathcal{L}^{\epsilon, \ell} I^{\epsilon,\ell} - \frac{1}{2} \bigl(I_{x^{\epsilon,1}}^{\epsilon,\ell}\bigr)^T a(s, x^{\epsilon,1}) I_{x^{\epsilon,1}}^{\epsilon,\ell} = 0 \quad \text{in} \quad \Omega_{\ell} = (0, T) \times \mathbb{R}^{(\ell-1) \times d} \times D \\
 I^{\epsilon,\ell} \bigl(s, x^{\epsilon,1}, \ldots, x^{\epsilon,\ell} \bigr) = 0 \quad \text{on} \quad \Gamma_{T, \ell}^{+} \\
 I^{\epsilon,\ell} \bigl(s, x^{\epsilon,1}, \ldots, x^{\epsilon,\ell} \bigr) = \infty \quad \text{on} \quad \{T\} \times \mathbb{R}^{(\ell-1) \times d} \times D
\end{array}\right\}  \label{Eq18}
\end{align}
where $\mathcal{L}^{\epsilon, \ell}$ is backward operator in Equation~\eqref{Eq5}. Observe that $I^{\epsilon,\ell} \bigl(s, x^{\epsilon,1}, \ldots, x^{\epsilon,\ell} \bigr)$ further satisfies the following dynamic programming equation
\begin{align}
0 =  I_s^{\epsilon,\ell} + \frac{\epsilon}{2} \operatorname{tr}\Bigl \{a\, I_{x^{\epsilon,1}x^{\epsilon,1}}^{\epsilon,\ell} \Bigr\} + \sum\nolimits_{j=1}^\ell \hat{f}_{j}^T\bigl(s, x^{\epsilon,1}, \ldots, x^{\epsilon,j} \bigr) I_{x^{\epsilon,j}}^{\epsilon,\ell} \notag \\ \quad\quad\quad + H^{\epsilon,\ell}\bigl(s, x^{\epsilon,1}, I_{x^{\epsilon,1}}^{\epsilon,\ell} \bigr) 
\quad \text{in} \quad \Omega_{\ell},  \label{Eq19}
\end{align}
for $\ell =2, 3, \ldots, n$, where
\begin{align}
H^{\epsilon,\ell}\bigl(s, x^{\epsilon,1}, p \bigr) = \hat{f}_{1}^T(s, x^{\epsilon,1}) p - \frac{1}{2} p^T \Bigl [a(s, x^{\epsilon,1})\Bigr]^{-1} p.  \label{Eq20}
\end{align}
Next, we define $L^{\epsilon,\ell}\bigl(s, x^{\epsilon,1}, \hat{u} \bigr)$ as
\begin{align}
L^{\epsilon,\ell}\bigl(s, x^{\epsilon,1}, \hat{u} \bigr) = \frac{1}{2} \Bigl(\hat{f}_{1}(s, x^{\epsilon,1}) - \hat{u} \Bigr)^T \Bigl [a(s, x^{\epsilon,1})\Bigr]^{-1} \Bigl(\hat{f}_{1}(s, x^{\epsilon,1}) - \hat{u} \Bigr).  \label{Eq21}
\end{align}
Then, we observe that there is a duality between $H^{\epsilon,\ell}\bigl(s, x^{\epsilon,1}, p \bigr)$ and $L^{\epsilon,\ell}\bigl(s, x^{\epsilon,1}, \hat{u} \bigr)$ such that
\begin{align}
L^{\epsilon,\ell}\bigl(s, x^{\epsilon,1}, \hat{u} \bigr) = \sup_{p} \biggl \{ H^{\epsilon,\ell}\bigl(s, x^{\epsilon,1}, p \bigr) - p^T \hat{u} \biggr\}  \label{Eq22}
\end{align}
and
\begin{align}
H^{\epsilon,\ell}\bigl(s, x^{\epsilon,1}, p \bigr) = \inf_{\hat{u}} \biggl \{ L\bigl(s, x^{\epsilon,1}, \hat{u} \bigr) + p^T \hat{u} \biggr\}.  \label{Eq23}
\end{align}
Furthermore, if we set $\epsilon=0$ in Equation~\eqref{Eq19}, then we have the following dynamic programming equation (e.g., see \cite[Chapter~4]{FemRi75})
\begin{align}
I_s^{0,\ell} + \sum\nolimits_{j=1}^\ell \hat{f}_{j}^T\bigl(s, x^{\epsilon,1}, \ldots, x^{\epsilon,j} \bigr) I_{x^{\epsilon,j}}^{0,\ell} +  \inf_{\hat{u}} \biggl \{ L^{\epsilon,\ell}\bigl(s, x^{\epsilon,1}, \hat{u} \bigr) + \bigl(I_{x^{\epsilon,1}}^{0,\ell} \bigr)^T \hat{u} \biggr\} = 0,  \label{Eq24}
\end{align}
for a family of deterministic minimum control problems corresponding to the following system of equations
\begin{align}
\left.\begin{array}{l}
d x^{0,1}(t) = \hat{u}(t) dt \\
d x^{0,\ell}(t) = \hat{f}_{\ell} \bigl(t, x^{0,1}(t), \ldots, x^{0,\ell}(t)\bigr) dt, \quad  s \le t \le T
\end{array}\right\}  \label{Eq25} 
\end{align}
for $\ell = 2, 3, \ldots, n$, with an initial condition
\begin{align*}
  \bigl(x^{0,1}(s), \ldots, x^{0,\ell}(s)\bigr) =  \bigl(x_s^{\epsilon,1}, \ldots, x_s^{\epsilon,\ell}\bigr)
\end{align*}
and the associated value functions
\begin{align}
I^{0,\ell} \bigl(s, x^{\epsilon,1}, \ldots, x^{\epsilon,\ell} \bigr) = \inf_{\hat{u} \in \hat{U}\bigl(s, x_s^{\epsilon,1}, \ldots, x_s^{\epsilon,\ell} \bigr)} \int_{s}^{\theta} L^{\epsilon,\ell}\bigl(t, x^{0,1}(t), \hat{u}(t) \bigr) dt, \label{Eq26}
\end{align}
where $\theta$ is the exit-time for $x^{0,\ell}(t)$ from the domain $D$, $\hat{U}\bigl(s, x^{\epsilon,1}, \ldots, x^{\epsilon,\ell} \bigr)$ is a class of continuous functions for which $\theta \le T$, and $\bigl(\theta, x^{0,1}(\theta), \ldots, x^{0,\ell}(\theta) \bigr)\in  \Gamma_{T, \ell}^{+}$.  

In the following subsection, using ideas from stochastic control theory (see \cite{Fle78} for similar ideas), we present results useful for proving the following asymptotic property 
\begin{align}
 I^{\epsilon,\ell} \bigl(s, x^{\epsilon,1}, \ldots, x^{\epsilon,\ell} \bigr) \rightarrow I^{0,\ell} \bigl(s, x^{\epsilon,1}, \ldots, x^{\epsilon,\ell} \bigr) \quad \text{as} \quad \epsilon \rightarrow 0, \label{Eq27}
\end{align}
for each $\ell=2, 3, \ldots, n$. The starting point for such an analysis is to introduce a family of related stochastic control problems whose dynamic programming equation, for $\epsilon > 0$, is given by Equation~\eqref{Eq19}. Then, this further allows us to reinterpret the exit probability function as a value function for a family of stochastic control problems that are associated with the underlying chain of distributed control systems.

\subsubsection{Stochastic control problems} \label{S2(2b)}
Consider the following boundary value problem
\begin{align}
\left.\begin{array}{c}
g_s^{\epsilon,\ell} + \frac{\epsilon}{2} \operatorname{tr}\Bigl \{a\, g_{x^{\epsilon,1}x^{\epsilon,1}}^{\epsilon,\ell} \Bigr\} + \sum\nolimits_{j=1}^\ell \hat{f}_{j}^T g_{x^{\epsilon,j}}^{\epsilon,\ell} = 0 \quad \text{in} \quad \Omega_{\ell} \\
g^{\epsilon,\ell}\bigl(s, x^{\epsilon,1}, \ldots, x^{\epsilon,\ell} \bigr) = \mathbb{E}_{s, x_{\widehat{1,\ell}}}^{\epsilon} \Bigl \{ \exp\Bigl(-\frac{1} {\epsilon} \Phi^{\epsilon,\ell} \bigl(s, x^{\epsilon,2}, \ldots, x^{\epsilon,\ell} \bigr) \Bigr) \Bigr\} \quad \text{on} \quad \partial^{\ast} \Omega_{\ell}
\end{array}\right\}  \label{Eq28} 
\end{align}
where the function $\Phi^{\epsilon,\ell} \bigl(s, x^{\epsilon,2}, \ldots, x^{\epsilon,\ell} \bigr)$ is a bounded, nonnegative Lipschitz function such that
\begin{align}
\Phi^{\epsilon,\ell} \bigl(s, x^{\epsilon,2}, \ldots, x^{\epsilon,\ell} \bigr) =0, \quad \forall \bigl(s, x^{\epsilon,1}, x^{\epsilon,2}, \ldots, x^{\epsilon,\ell} \bigr) \in \Gamma_{T, \ell}^{+}. \label{Eq29} 
\end{align}
Observe that the function $g_s^{\epsilon,\ell} \bigl(s, x^{\epsilon,1}, \ldots, x^{\epsilon,\ell} \bigr)$ is a smooth solution in $\Omega_{\ell}$ to the backward operator in Equation~\eqref{Eq5}; and it is continuous on $\partial^{\ast} \Omega_{\ell}$. Moreover, if we introduce the following logarithm transformation (cf. Equation~\eqref{Eq17})
\begin{align}
  J^{\epsilon, \ell} \bigl(s, x^{\epsilon,1}, \ldots, x^{\epsilon,\ell} \bigr) = - \epsilon \log g_s^{\epsilon,\ell}\bigl(s, x^{\epsilon,1}, \ldots, x^{\epsilon,\ell} \bigr). \label{Eq30} 
\end{align}
Then,  $J^{\epsilon, \ell}\bigl(s, x^{\epsilon,1}, \ldots, x^{\epsilon,\ell}\bigr)$ satisfies the following 
\begin{align}
0 =  J_s^{\epsilon,\ell} + \frac{\epsilon}{2} \operatorname{tr}\Bigl \{a\, J_{x^{\epsilon,1}x^{\epsilon,1}}^{\epsilon,\ell} \Bigr\} + \sum\nolimits_{j=1}^\ell \hat{f}_{j}^T J_{x^{\epsilon,j}}^{\epsilon,\ell} + H^{\epsilon,\ell}\bigl(s, x^{\epsilon,1}, J_{x^{\epsilon,1}}^{\epsilon,\ell} \bigr) ~ \text{in} ~ \Omega_{\ell},  \label{Eq31}
\end{align}
for $\ell =2, 3, \ldots, n$, where
\begin{align}
H^{\epsilon,\ell}\bigl(s, x^{\epsilon,1}, J_{x^{\epsilon,1}}^{\epsilon,\ell} \bigr) = \hat{f}_{1}^T(s, x^{\epsilon,1}) J_{x^{\epsilon,1}}^{\epsilon,\ell} - \frac{1}{2} \Bigl(J_{x^{\epsilon,1}}^{\epsilon,\ell}\Bigr)^T \Bigl [a(s, x^{\epsilon,1})\Bigr]^{-1} J_{x^{\epsilon,1}}^{\epsilon,\ell}.  \label{Eq32}
\end{align}
Note that the duality relation between $H^{\epsilon,\ell} \bigl(s, x^{\epsilon,1},\,\cdot\bigr)$ and $L^{\epsilon,\ell} \bigl(s, x^{\epsilon,1},\,\cdot\bigr)$, i.e.,
\begin{align}
H^{\epsilon,\ell} \bigl(s, x^{\epsilon,1}, J_{x^{\epsilon,1}}^{\epsilon,\ell}\bigr) = \inf_{\hat{u}} \biggl \{L^{\epsilon,\ell} \bigl(s, x^{\epsilon,1}, \hat{u} \bigr) + \Bigl(J_{x^{\epsilon,1}}^{\epsilon,\ell}\Bigr)^T \hat{u} \biggr\}, \label{Eq33}
\end{align}
where
\begin{align*}
L^{\epsilon,\ell} \bigl(s, x^{\epsilon,1}, \hat{u} \bigr) = \frac{1}{2} \Bigl(\hat{f}_{1}(s, x^{\epsilon,1}) - \hat{u} \Bigr)^T \Bigl [a(s, x^{\epsilon,1})\Bigr]^{-1} \Bigl(\hat{f}_{1}(s, x^{\epsilon,1}) - \hat{u} \Bigr).
\end{align*}
Then, it is easy to see that $J^{\epsilon,\ell}\bigl(s, x^{\epsilon,1}, \ldots, x^{\epsilon,\ell} \bigr)$ is a solution in  $\Omega_{\ell}$, with $J^{\epsilon, \ell}=\Phi^{\epsilon,\ell}$ on $\partial^{\ast} \Omega_{\ell}$, to the dynamic programming in Equation~\eqref{Eq31}, where the latter is associated with the following stochastic control problem 
\begin{align}
 J^{\epsilon, \ell}\bigl(s, x^{\epsilon,1}, \ldots, x^{\epsilon,\ell} \bigr) = \inf_{\hat{u} \in \hat{U}\bigl(s, x_s^{\epsilon,1}, \ldots, x_s^{\epsilon,\ell} \bigr)} \mathbb{E}_{s, x_{\widehat{1,\ell}}}^{\epsilon}\Biggl\{  \int_{s}^{\theta}  L^{\epsilon,\ell} \bigl(s,x^{\epsilon,1}, \hat{u} \bigr)dt \notag \\
 + \Phi^{\epsilon,\ell} \bigl(\theta, x^{\epsilon,2}, \ldots, x^{\epsilon,\ell} \bigr) \Biggr\}, \label{Eq34}
\end{align}
that corresponds to system of stochastic differential equations
\begin{align}
\left.\begin{array}{l}
d x^{\epsilon,1}(t) = \hat{u}(t) dt + \sqrt{\epsilon} \,\sigma \bigl(t, x^{\epsilon,1}(t)\bigr) dW(t) \\
d x^{\epsilon,\ell}(t) = \hat{f}_{\ell} \bigl(t, x^{\epsilon,1}(t), \ldots, x^{\epsilon,\ell}(t)\bigr) dt, \quad  s \le t \le T
\end{array}\right\}  \label{Eq35} 
\end{align}
for $\ell =2, 3, \ldots, n$, with an initial condition
\begin{align*}
  \bigl(x^{\epsilon,1}(s), \ldots, x^{\epsilon,\ell}(s)\bigr) =  \bigl(x_s^{\epsilon,1}, \ldots, x_s^{\epsilon,\ell}\bigr),
\end{align*}
and where $\hat{U}\bigl(s, x^{\epsilon,1}, \ldots, x^{\epsilon,\ell} \bigr)$ is a class of continuous functions for which $\theta \le T$ ~ and ~ $\bigl(\theta, x^{\epsilon,1}(\theta), \ldots, x^{\epsilon,\ell}(\theta) \bigr)\in \Gamma_{T, \ell}^{+}$. 

In what follows, we provide bounds (i.e., the asymptotic lower/upper bounds) on the exit probability $q^{\epsilon,\ell} \bigl(s, x^{\epsilon,1}, \ldots, x^{\epsilon,\ell} \bigr)$ for each $\ell =2, 3, \ldots, n$.

Define
\begin{align}
 I_{D}^{\epsilon,\ell} \Bigl(\bigl(s, x^{\epsilon,1}, \ldots, x^{\epsilon,\ell} \bigr); \, \partial D \Bigr) &= -\lim_{\epsilon \rightarrow 0} \epsilon\,\log \mathbb{P}_{s, x_{\widehat{1,\ell}}}^{\epsilon} \Bigl \{ x^{\epsilon,\ell}(\theta) \in \partial D \Bigl\}, \notag \\
 & \triangleq -\lim_{\epsilon \rightarrow 0} \epsilon\,\log q^{\epsilon,\ell} \bigl(s, x^{\epsilon,1}, \ldots, x^{\epsilon,\ell} \bigr), \label{Eq36}
\end{align}
where $\theta$ (or $\theta=\tau_D^{\epsilon,\ell} \wedge T$) is the first exit-time of $x^{\epsilon,\ell}(t)$ from the domain $D$. Further, let us introduce the following supplementary minimization problem 
\begin{align}
\tilde{I}_{D}^{\epsilon,\ell} \Bigl(s, \varphi, \theta\Bigr) = \inf_{\varphi \in C_{sT}\bigl([s, T], \mathbb{R}^d\bigr), \theta \ge s} \int_{s}^{\theta} L^{\epsilon,\ell}\big(t, \varphi(t),\dot{\varphi}(t)\big)dt, \label{Eq37}
\end{align}
where the infimum is taken among all $\varphi(\cdot) \in C_{sT}\bigl([s, T], \mathbb{R}^d\bigr)$ (i.e., from the space of $\mathbb{R}^d$-valued (locally) absolutely continuous functions, with $\int_{s}^T \bigl\vert \dot{\varphi}(t) \bigr\vert^2 dt < \infty$ for each $T > s$) and $\theta \ge s > 0$ such that $\varphi(s)=x_s^{\epsilon,1}$, $\bigl(t, \varphi(t), x^{\epsilon,2}(t), \ldots, x^{\epsilon,\ell}(t) \bigr) \in \Omega_{\ell}$, for all $t \in [s,\,\theta)$, and $\bigl(\theta, \varphi(\theta),x^{\epsilon,2}(\theta), \ldots, x^{\epsilon,\ell}(\theta) \bigr) \in \Gamma_{T, \ell}^{+}$. Then, it is easy to see that
\begin{align}
 \tilde{I}_{D}^{\epsilon,\ell} \Bigl(s, \varphi, \theta\Bigr) = I_{D}^{\epsilon,\ell} \Bigl(\bigl(s, x^{\epsilon,1}, \ldots, x^{\epsilon,\ell} \bigr); \,\partial D \Bigr). \label{Eq38}
\end{align}

Next, we state the following lemma that will be useful for proving Proposition~\ref{P2} (cf. \cite[Lemma~3.1]{Fle78}).
\begin{lemma} \label{L2} 
If $\varphi(\cdot) \in C_{sT}\bigl([s, T], \mathbb{R}^d\bigr)$, for $s> 0$, and $\varphi(s)=x_s^{\epsilon,1}$, $\bigl(t, \varphi(t), x^{\epsilon,2}(t), \ldots, \\ x^{\epsilon,\ell}(t) \bigr) \in \Omega_{\ell}$, for all $t \in [s,T)$, then $\lim_{T \rightarrow \infty} \int_{s}^{T} L^{\epsilon,\ell}\big(t,\varphi(t),\dot{\varphi}(t)\big)dt = +\infty$.
\end{lemma}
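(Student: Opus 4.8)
The plan is to argue by contradiction, exploiting the coercivity of the running cost $L^{\epsilon,\ell}$ together with the outward drift condition of Assumption~\ref{AS1}(c) and the chain structure \eqref{Eq25}.

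First I would record a global coercivity bound for the Lagrangian. Since Assumption~\ref{AS1}(a) guarantees that $\sigma$ and $\sigma^{-1}$ are bounded, the matrix $a(t,x^{\epsilon,1})=\sigma\sigma^{T}$ satisfies $\lambda I \le a \le \Lambda I$ for some constants $0<\lambda\le\Lambda<\infty$, hence $\Lambda^{-1} I \le a^{-1} \le \lambda^{-1} I$. Writing $v(t)=\dot\varphi(t)-\hat f_{1}(t,\varphi(t))$, the definition \eqref{Eq21} then yields the pointwise bound
\[
L^{\epsilon,\ell}\bigl(t,\varphi(t),\dot\varphi(t)\bigr) \ge \frac{1}{2\Lambda}\,\bigl\vert \dot\varphi(t)-\hat f_{1}(t,\varphi(t))\bigr\vert^{2} = \frac{1}{2\Lambda}\,\vert v(t)\vert^{2} \ge 0 .
\]
Because the integrand is nonnegative, $T\mapsto\int_{s}^{T}L^{\epsilon,\ell}\,dt$ is nondecreasing and the limit in the statement exists in $[0,+\infty]$; it therefore suffices to exclude a finite value.

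Next I would suppose for contradiction that $\int_{s}^{\infty}L^{\epsilon,\ell}(t,\varphi,\dot\varphi)\,dt=M<\infty$. By the coercivity bound this forces $v\in L^{2}\bigl([s,\infty);\mathbb{R}^{d}\bigr)$, so $\varphi$ solves $\dot\varphi=\hat f_{1}(t,\varphi)+v$ with an $L^{2}$ forcing term. Feeding $\varphi$ into the remaining (uncontrolled) equations of the chain \eqref{Eq25} yields the trajectory $(x^{\epsilon,1},\ldots,x^{\epsilon,\ell})$, which by hypothesis keeps $x^{\epsilon,\ell}(t)\in D$ for all $t\in[s,\infty)$. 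I would then compare this trajectory, over sliding finite windows $[t,t+1]$, with the solution of the purely deterministic chain \eqref{Eq3} issued from the current state $(x^{\epsilon,1}(t),\ldots,x^{\epsilon,\ell}(t))$; using the uniform Lipschitz bounds of Assumption~\ref{AS1}(a) and the Gronwall--Bellman inequality (exactly as in Lemma~\ref{L1}, Part~(i)), the discrepancy between the two $\ell$-th components on $[t,t+1]$ is controlled by $\int_{t}^{t+1}\vert v\vert\,dr$, which tends to $0$ as $t\to\infty$ since $v\in L^{2}$. On these windows the confined trajectory thus shadows the zero-cost (drift-following) flow. Invoking Assumption~\ref{AS1}(c), on $\Gamma_{\ell}^{+}$ one has $\hat f_{\ell}^{T} n(x^{\epsilon,\ell})>0$, so the zero-cost flow is expelled from the bounded domain $D$ through $\partial D$; keeping $x^{\epsilon,\ell}$ inside $D$ therefore requires $v$ to remain bounded away from $0$ on a set of infinite measure, contradicting $v\in L^{2}([s,\infty))$ and hence $M<\infty$.

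I expect the main obstacle to be precisely this last step, where the degeneracy of the chain enters: the cost $L^{\epsilon,\ell}$ penalizes only deviations of the first component $x^{\epsilon,1}$ from its drift $\hat f_{1}$, whereas the confinement constraint is imposed on the $\ell$-th component $x^{\epsilon,\ell}$, and $L^{2}$ control over an infinite horizon is a weak form of closeness. Converting ``the trajectory remains in the bounded set $\bar D$'' into ``a positive running cost is incurred on a set of infinite measure'' requires quantifying, through the chain \eqref{Eq25} and the full-rank coupling of Remark~\ref{R2}, how large a deviation in the $x^{\epsilon,1}$-channel is needed to cancel the outward drift of $x^{\epsilon,\ell}$ at $\Gamma_{\ell}^{+}$. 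I would handle this by a compactness/recurrence argument along the bounded trajectory $x^{\epsilon,\ell}(\cdot)\in\bar D$, combined with the finite-window Gronwall estimates above, to extract a uniform positive lower bound on the cost rate over infinitely many disjoint unit intervals, thereby forcing $\int_{s}^{T}L^{\epsilon,\ell}\,dt\to+\infty$.
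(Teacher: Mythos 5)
Two remarks before the verdict. First, the paper never proves Lemma~\ref{L2}: it is stated with the pointer ``cf.\ \cite[Lemma~3.1]{Fle78}'', and the results that use it (Lemma~\ref{L3}, Proposition~\ref{P2}) are phrased as ``Suppose that Lemma~\ref{L2} holds'', so within this paper the lemma functions as a standing hypothesis rather than a proved statement; your attempt must therefore stand on its own. Second, its first half does stand: Assumption~\ref{AS1}(a) gives $\lambda I \le a \le \Lambda I$, hence the coercivity bound $L^{\epsilon,\ell}(t,\varphi,\dot\varphi)\ge \frac{1}{2\Lambda}\vert v(t)\vert^2$ with $v=\dot\varphi-\hat f_1(t,\varphi)$, monotonicity of the cost in $T$, the reduction to excluding $v\in L^2([s,\infty);\mathbb{R}^d)$ for a forever-confined trajectory, and the sliding-window Gronwall comparison with the unperturbed chain \eqref{Eq3} are all correct.

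The genuine gap is exactly the step you flagged yourself: the claim that Assumption~\ref{AS1}(c) implies ``the zero-cost flow is expelled from the bounded domain $D$ through $\partial D$.'' Assumption~\ref{AS1}(c) constrains only the sign of $\hat f_\ell^T n$ \emph{at points of} $\partial D$; it says nothing about whether trajectories of the deterministic chain \eqref{Eq3} starting inside $D$ ever reach $\partial D$, and no compactness or recurrence argument can produce such a statement, because the lemma is not derivable from Assumption~\ref{AS1} at all. Concretely, take $d=1$, $\ell=2$, $\sigma\equiv 1$, $D=(-1,1)$, $\hat f_1(x^1)=-\chi(x^1)$, $\hat f_2(x^1,x^2)=\chi(x^1)-\chi(x^2)$, with $\chi$ smooth, odd, bounded, $\chi'>0$, and $\chi(r)=r$ on $[-2,2]$. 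Then (a) holds; (b) holds since one bracket of $\partial_{x^1}$ with the drift gives $\chi'(x^1)\partial_{x^2}$ modulo $\partial_{x^1}$; and (c) holds, because the paths of $x^{\epsilon,2}$ are $C^1$, so the first exit from $D$ can only occur with outward or tangential velocity (i.e.\ through $\Gamma_2^{+}\cup\Gamma_2^{0}$), and this positively recurrent hypoelliptic pair exits in finite time almost surely. Yet $(0,0)$ is a stable equilibrium of the chain with second component inside $D$, so $\varphi\equiv 0$ (with $x^{\epsilon,2}\equiv 0$, started from $x_s=(0,0)$) is admissible for the lemma, stays confined for all time, and has $L^{\epsilon,2}(t,\varphi,\dot\varphi)\equiv 0$: the limit is $0$, not $+\infty$. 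This is precisely the classical Freidlin--Wentzell situation of a stable equilibrium inside the domain, and it is fully compatible with Assumption~\ref{AS1}. (A second, independent defect of the proposed repair: only the $\ell$-th component is confined to $\bar D$, while $x^{\epsilon,1},\ldots,x^{\epsilon,\ell-1}$ range over the unbounded factor $\mathbb{R}^{(\ell-1)\times d}$, so there is no compactness of the full state along the trajectory.) What your proof, and tacitly the paper, actually needs is an additional hypothesis in the spirit of Fleming's: e.g., every solution of the unperturbed chain \eqref{Eq3} issued from the closure of the cylinder leaves $\bar D$ in its $\ell$-th component, by a fixed positive distance, within a uniformly bounded time $T_0$. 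Granted that, your scheme closes (on each of infinitely many disjoint windows of length $T_0$, confinement forces an order-one sup-norm deviation from the zero-cost flow, hence by Gronwall and Cauchy--Schwarz a fixed positive cost per window); without it, the expulsion step, and hence the proof, fails.
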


Consider again the stochastic control problem in Equation~\eqref{Eq34} (together with Equation~\eqref{Eq35}). Suppose that  $\Phi_M^{\epsilon,\ell}$ (with $\Phi_M^{\epsilon,\ell} \ge 0$) is class $C^2$ such that $\Phi_M^{\epsilon,\ell} \rightarrow +\infty$ as $M \rightarrow \infty$ uniformly on any compact subset of $\Omega_{\ell} \setminus \bar{\Gamma}_{T, \ell}^{+}$ and $\Phi_M^{\epsilon,\ell}$ on $\Gamma_{T, \ell}^{+}$. Further, if we let $J^{\epsilon,\ell} = J_{\Phi_M}^{\epsilon,\ell}$, when  $\Phi^{\epsilon,\ell} = \Phi_M^{\epsilon,\ell}$, then we have the following lemma.
\begin{lemma} \label{L3}
Suppose that Lemma~\ref{L2} holds, then we have
\begin{align}
 \liminf_{\substack{M \rightarrow \infty \\ \bigl(t, x^{\epsilon,1}(t), \ldots, x^{\epsilon,\ell}(t) \bigr) \rightarrow \bigl(s, x^{\epsilon,1}(s), \ldots, x^{\epsilon,\ell}(s) \bigr)}} J_{\Phi_M}^{\epsilon,\ell}(\bigl(s, x^{\epsilon,1}, \ldots, x^{\epsilon,\ell} \bigr)) \ge I^{\epsilon,\ell} \bigl(s, x^{\epsilon,1}, \ldots, x^{\epsilon,\ell} \bigr).  \label{Eq39}
\end{align}
\end{lemma}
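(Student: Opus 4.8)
The plan is to exploit the stochastic (Feynman--Kac) representation of the linear boundary value problem in Equation~\eqref{Eq28}. Exactly as in Equation~\eqref{Eq14} of the proof of Proposition~\ref{P1}, the smooth solution associated with the penalty $\Phi_M^{\epsilon,\ell}$ admits the form $g_{\Phi_M}^{\epsilon,\ell}\bigl(s,x^{\epsilon,1},\ldots,x^{\epsilon,\ell}\bigr)=\mathbb{E}_{s,x_{\widehat{1,\ell}}}^{\epsilon}\bigl[\exp(-\tfrac{1}{\epsilon}\Phi_M^{\epsilon,\ell}(\theta,x^{\epsilon,2},\ldots,x^{\epsilon,\ell}))\bigr]$, with $\theta=\tau_D^{\epsilon,\ell}\wedge T$ and the diffusion governed by the feedback-controlled system in Equation~\eqref{Eq1}. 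Since $J_{\Phi_M}^{\epsilon,\ell}=-\epsilon\log g_{\Phi_M}^{\epsilon,\ell}$ and $I^{\epsilon,\ell}=-\epsilon\log q^{\epsilon,\ell}$, the assertion in Equation~\eqref{Eq39} is equivalent to the upper bound $\limsup g_{\Phi_M}^{\epsilon,\ell}(t,\ldots)\le q^{\epsilon,\ell}\bigl(s,x^{\epsilon,1},\ldots,x^{\epsilon,\ell}\bigr)$ taken jointly as $M\rightarrow\infty$ and $\bigl(t,x^{\epsilon,1}(t),\ldots,x^{\epsilon,\ell}(t)\bigr)\rightarrow\bigl(s,x^{\epsilon,1}(s),\ldots,x^{\epsilon,\ell}(s)\bigr)$. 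I would prove the lemma in this dual form.

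First I would establish the pointwise limit $g_{\Phi_M}^{\epsilon,\ell}\rightarrow q^{\epsilon,\ell}$ as $M\rightarrow\infty$ at a fixed interior point. Because $\Phi_M^{\epsilon,\ell}\ge 0$, vanishes on $\Gamma_{T,\ell}^{+}$, and tends to $+\infty$ uniformly on compact subsets of $\Omega_{\ell}\setminus\bar{\Gamma}_{T,\ell}^{+}$, the integrand $\exp(-\tfrac{1}{\epsilon}\Phi_M^{\epsilon,\ell}(\theta,\ldots))$ converges almost surely to the indicator $\mathbf{1}_{\{(\theta,x^{\epsilon,1}(\theta),\ldots,x^{\epsilon,\ell}(\theta))\in\Gamma_{T,\ell}^{+}\}}$: on $\{\tau_D^{\epsilon,\ell}\le T\}$ the exit point lies in $\Gamma_{\ell}^{+}\cup\Gamma_{\ell}^{0}$ with $\Gamma_{\ell}^{0}$ carrying no probability mass by Assumption~\ref{AS1}(c), whereas on the complementary event $\theta=T$ and, by the hypoellipticity in Assumption~\ref{AS1}(b) together with Remark~\ref{R2}, the process $x^{\epsilon,\ell}(T)$ has a density so that $x^{\epsilon,\ell}(T)\in D$ strictly almost surely, placing the terminal point where $\Phi_M^{\epsilon,\ell}\rightarrow+\infty$. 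Dominating the integrand by $1$ and invoking the dominated convergence theorem then yields $g_{\Phi_M}^{\epsilon,\ell}\rightarrow\mathbb{P}_{s,x_{\widehat{1,\ell}}}^{\epsilon}\{(\theta,\ldots)\in\Gamma_{T,\ell}^{+}\}=\mathbb{P}_{s,x_{\widehat{1,\ell}}}^{\epsilon}\{\tau_D^{\epsilon,\ell}\le T\}=q^{\epsilon,\ell}$.

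To pass to the joint limit I would arrange the penalties so that $\Phi_M^{\epsilon,\ell}$ is nondecreasing in $M$, whence $g_{\Phi_M}^{\epsilon,\ell}$ is nonincreasing in $M$. For any fixed $M_0$ and all $M\ge M_0$ one has $g_{\Phi_M}^{\epsilon,\ell}(t,\ldots)\le g_{\Phi_{M_0}}^{\epsilon,\ell}(t,\ldots)$, and since each $g_{\Phi_{M_0}}^{\epsilon,\ell}$ is a smooth solution in $\Omega_{\ell}$ that is continuous on $\partial^{\ast}\Omega_{\ell}$ (as recorded just after Equation~\eqref{Eq29}), letting $\bigl(t,x^{\epsilon,1}(t),\ldots\bigr)\rightarrow\bigl(s,x^{\epsilon,1}(s),\ldots\bigr)$ gives $\limsup g_{\Phi_M}^{\epsilon,\ell}(t,\ldots)\le g_{\Phi_{M_0}}^{\epsilon,\ell}\bigl(s,x^{\epsilon,1},\ldots,x^{\epsilon,\ell}\bigr)$. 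As this holds for every $M_0$, the infimum over $M_0$ together with the pointwise limit of the previous step yields $\limsup g_{\Phi_M}^{\epsilon,\ell}(t,\ldots)\le q^{\epsilon,\ell}\bigl(s,x^{\epsilon,1},\ldots,x^{\epsilon,\ell}\bigr)$. Applying the decreasing, continuous map $-\epsilon\log(\cdot)$, which interchanges $\limsup$ and $\liminf$, delivers Equation~\eqref{Eq39}; the degenerate case $q^{\epsilon,\ell}=0$ (so $I^{\epsilon,\ell}=+\infty$) is covered since then the right-hand side is already infinite.

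The main obstacle is the almost-sure identification of the limiting integrand at the terminal boundary---ruling out accumulation of probability on $\{T\}\times\mathbb{R}^{(\ell-1)\times d}\times\partial D$ and on $\Gamma_{\ell}^{0}$---for which Assumption~\ref{AS1}(c) and the hypoelliptic smoothness of the transition density from Remark~\ref{R2} are indispensable. Lemma~\ref{L2} enters to guarantee that the running cost charged to trajectories remaining in $D$ cannot stay bounded, so that the infinite boundary datum defining $I^{\epsilon,\ell}$ is faithfully recovered as the $M\rightarrow\infty$ limit of the finite penalties $\Phi_M^{\epsilon,\ell}$; the monotone dominated-convergence step and the interior continuity of each $g_{\Phi_{M_0}}^{\epsilon,\ell}$ are the devices that upgrade this pointwise control into the required joint lower-semicontinuity bound.
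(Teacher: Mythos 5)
The paper never actually proves Lemma~\ref{L3}: it is stated bare, with the burden carried implicitly by the citation of Fleming \cite{Fle78} and its penalization scheme, so there is no in-paper argument to compare yours against. What you have written is essentially the classical Fleming-type argument in its dual (linear) form: the Feynman--Kac representation $g_{\Phi_M}^{\epsilon,\ell}=\mathbb{E}_{s,x_{\widehat{1,\ell}}}^{\epsilon}\bigl[\exp(-\tfrac{1}{\epsilon}\Phi_M^{\epsilon,\ell}(\theta,\cdot))\bigr]$, dominated convergence to get the pointwise limit $g_{\Phi_M}^{\epsilon,\ell}\rightarrow q^{\epsilon,\ell}$ (with Assumption~\ref{AS1}(c) forcing the exit distribution onto $\Gamma_{T,\ell}^{+}$), and a Dini-type monotonicity step to upgrade this to the joint $\liminf$ in Equation~\eqref{Eq39}. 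The logic and the direction of the inequality are right, and your conclusion meshes correctly with the paper's Equation~\eqref{Eq43}, which gives the opposite bound $q^{\epsilon,\ell}\le g_{\Phi_M}^{\epsilon,\ell}$: together they show $g_{\Phi_M}^{\epsilon,\ell}\downarrow q^{\epsilon,\ell}$, which is exactly what Proposition~\ref{P2} needs.

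Four caveats you should address explicitly, none fatal. First, your Dini step requires $\Phi_M^{\epsilon,\ell}$ nondecreasing in $M$; the lemma as stated applies to an arbitrary family with the stated blow-up property, so strictly you prove it only for monotone families. This suffices for Proposition~\ref{P2}, where the family is at the prover's disposal, but say so; for a general family the step must be replaced by an argument combining the uniform blow-up on compacta with the continuity of $q^{\epsilon,\ell}$ from Proposition~\ref{P1}. Second, your dominated-convergence step needs $\Phi_M^{\epsilon,\ell}\rightarrow+\infty$ at terminal points $(T,x^{\epsilon,2}(T),\ldots,x^{\epsilon,\ell}(T))$ with $x^{\epsilon,\ell}(T)\in D$; the paper's hypothesis literally posits blow-up only on compact subsets of $\Omega_{\ell}\setminus\bar{\Gamma}_{T,\ell}^{+}$, and $\Omega_{\ell}$ is open, so it excludes the slice $\{T\}\times\mathbb{R}^{(\ell-1)\times d}\times D$. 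You are silently using the intended (Fleming) reading, i.e., blow-up on compact subsets of $\bar{\Omega}_{\ell}\setminus\bar{\Gamma}_{T,\ell}^{+}$; make that reading explicit, since without it the key almost-sure identification of the limiting integrand fails. Third, the appeal to hypoellipticity and Remark~\ref{R2} to place $x^{\epsilon,\ell}(T)$ strictly inside $D$ is unnecessary: on $\{\tau_D^{\epsilon,\ell}>T\}$ the terminal point is interior by the very definition of the exit time, and the only probabilistic input needed is that $\Gamma_{\ell}^{0}$ carries no mass, i.e., Assumption~\ref{AS1}(c). Fourth, despite your closing paragraph, your proof never uses Lemma~\ref{L2} at all --- which is defensible, since inequality~\eqref{Eq39} does not need it (its role lies in the deterministic problem and in Proposition~\ref{P2}) --- but then the claim that Lemma~\ref{L2} ``enters to guarantee'' recovery of the infinite boundary datum describes no step of your argument and should be deleted or replaced by the honest remark that the hypothesis is not invoked.
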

Then, we have the following result.
\begin{proposition} \label{P2}
Suppose that Lemma~\ref{L2} holds, then we have
\begin{align}
  I^{\epsilon,\ell} \bigl(s, x^{\epsilon,1}, \ldots, x^{\epsilon,\ell} \bigr) \rightarrow I^{0,\ell} \bigl(s, x^{\epsilon,1}, \ldots, x^{\epsilon,\ell} \bigr) \quad \text{as} \quad \epsilon \rightarrow 0, \label{Eq40}
\end{align}
uniformly for all $\bigl(s, x^{\epsilon,1}(s), \ldots, x^{\epsilon,\ell}(s) \bigr)$ in any compact subset $\bar{\Omega}_{\ell}$. 
\end{proposition}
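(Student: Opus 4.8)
The plan is to prove Equation~\eqref{Eq40} by sandwiching $I^{\epsilon,\ell}$ between matching asymptotic bounds, following the stochastic-control method of Fleming~\cite{Fle78}. The structural input is that, through the logarithmic transformation in Equation~\eqref{Eq30} and the duality in Equation~\eqref{Eq33}, $I^{\epsilon,\ell}$ is the value function of the stochastic control problem in Equations~\eqref{Eq34}--\eqref{Eq35} carrying the singular boundary data ($0$ on $\Gamma_{T,\ell}^{+}$ and $+\infty$ on the terminal face $\{T\}\times\mathbb{R}^{(\ell-1)\times d}\times D$), whereas $I^{0,\ell}$ is the value of the corresponding deterministic problem in Equations~\eqref{Eq25}--\eqref{Eq26}. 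Since the dynamic programming equations~\eqref{Eq19} and~\eqref{Eq24} differ only by the second-order term $\tfrac{\epsilon}{2}\operatorname{tr}\{a\,I^{\epsilon,\ell}_{x^{\epsilon,1}x^{\epsilon,1}}\}$, the task reduces to showing that this term is asymptotically negligible at the level of value functions. Because the singular terminal data is awkward to handle directly, I would work through the smoothly penalized problems $J^{\epsilon,\ell}_{\Phi_M}$ of Equation~\eqref{Eq34}: since $\Phi_M^{\epsilon,\ell}\ge 0$ vanishes on $\Gamma_{T,\ell}^{+}$ and stays finite elsewhere, comparison of terminal costs gives $J^{\epsilon,\ell}_{\Phi_M}\le I^{\epsilon,\ell}$, while Lemma~\ref{L3} supplies the reverse asymptotic inequality, so that $J^{\epsilon,\ell}_{\Phi_M}\to I^{\epsilon,\ell}$ as $M\to\infty$.

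For the lower bound $\liminf_{\epsilon\to 0} I^{\epsilon,\ell}\ge I^{0,\ell}$, I would use $I^{\epsilon,\ell}\ge J^{\epsilon,\ell}_{\Phi_M}$ for each fixed $M$. For fixed $M$ the penalty $\Phi_M^{\epsilon,\ell}$ is bounded and of class $C^2$, so the passage $J^{\epsilon,\ell}_{\Phi_M}\to J^{0,\ell}_{\Phi_M}$ as $\epsilon\to 0$ is the routine nondegenerate-to-deterministic limit, provable either by the trajectory/Gronwall estimate of Lemma~\ref{L1}(i) or by stability of the (now uniformly parabolic) equation~\eqref{Eq31}. Taking $\epsilon\to 0$ then gives $\liminf_{\epsilon\to 0} I^{\epsilon,\ell}\ge J^{0,\ell}_{\Phi_M}$, and since $\Phi_M^{\epsilon,\ell}\uparrow$ the singular penalty one has $J^{0,\ell}_{\Phi_M}\uparrow I^{0,\ell}$ as $M\to\infty$; here Lemma~\ref{L2} is essential, since it forces any trajectory that never exits $\Omega_\ell$ to accrue infinite cost, so that the penalized deterministic values genuinely exhaust $I^{0,\ell}$. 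Letting $M\to\infty$ yields the lower bound.

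For the upper bound $\limsup_{\epsilon\to 0} I^{\epsilon,\ell}\le I^{0,\ell}$, I would fix an almost-optimal continuous control $\hat{u}$ for the deterministic problem~\eqref{Eq26}, driving the state to $\Gamma_{T,\ell}^{+}$ before time $T$, and feed the same $\hat{u}$ into the stochastic system~\eqref{Eq35}. A Gronwall--Bellman argument identical to Lemma~\ref{L1}(i) shows that the stochastic trajectory converges uniformly on $[s,T]$ to the deterministic one as $\epsilon\to 0$, almost surely, and Assumption~\ref{AS1}(c) guarantees that the exit occurs transversally through $\Gamma_{T,\ell}^{+}$ rather than the tangency set $\Gamma_\ell^{0}$, so the stochastic exit-time converges to the deterministic one exactly as in Lemma~\ref{L1}(ii). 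Continuity of the running cost $L^{\epsilon,\ell}$ in Equation~\eqref{Eq21} together with dominated convergence then forces the expected cost along the stochastic trajectory to converge to its deterministic value; taking the infimum over admissible controls yields $\limsup_{\epsilon\to 0} I^{\epsilon,\ell}\le I^{0,\ell}$.

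The main obstacle is upgrading these pointwise limits to the uniform convergence on compact subsets of $\bar{\Omega}_\ell$ asserted in the statement, which in turn requires controlling the penalization error $I^{\epsilon,\ell}-J^{\epsilon,\ell}_{\Phi_M}$ uniformly in $\epsilon$ (equivalently, interchanging the limits $\epsilon\to 0$ and $M\to\infty$). The degeneracy of the diffusion---the noise enters only the first block $x^{\epsilon,1}$---means one cannot invoke a nondegenerate interior modulus of continuity for the family $\{I^{\epsilon,\ell}\}$, so I would instead derive $\epsilon$-uniform a~priori bounds and equicontinuity estimates directly from the control representation, using the coercivity of $L^{\epsilon,\ell}$ (Lemma~\ref{L2}) and the transversal-exit property (Assumption~\ref{AS1}(c) and its footnote, which rule out mass escaping through $\Gamma_\ell^{0}$). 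These uniform estimates are precisely what lets the two one-sided bounds be combined locally uniformly, completing the proof.
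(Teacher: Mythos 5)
Your sandwich framework and your lower bound follow the paper's (i.e., Fleming's \cite{Fle78}) penalization route: the chain $J^{\epsilon,\ell}_{\Phi_M}\le I^{\epsilon,\ell}$, then $J^{\epsilon,\ell}_{\Phi_M}\to J^{0,\ell}_{\Phi_M}$ as $\epsilon\to 0$, then $J^{0,\ell}_{\Phi_M}\uparrow I^{0,\ell}$ via the coercivity of Lemma~\ref{L2}, is legitimate, and it substitutes for the paper's shortcut of simply citing the Ventcel--Freidlin upper estimate for Equation~\eqref{Eq41} (see \cite{Fre76}, \cite{VenFre70}, \cite{Ven73}). Two cautions on that half: penalizing the boundary data does \emph{not} make Equation~\eqref{Eq31} uniformly parabolic --- the noise still enters only through $x^{\epsilon,1}$, so the operator remains degenerate --- and the vanishing-noise limit $J^{\epsilon,\ell}_{\Phi_M}\to J^{0,\ell}_{\Phi_M}$ is the main technical burden rather than a routine step; of your two suggested arguments only the trajectory/Gronwall one is available in this degenerate setting.

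The genuine gap is in your upper bound $\limsup_{\epsilon\to 0}I^{\epsilon,\ell}\le I^{0,\ell}$. You insert a near-optimal deterministic control $\hat u$ into \eqref{Eq35} and invoke dominated convergence for ``the expected cost along the stochastic trajectory''. But $I^{\epsilon,\ell}$ is the value of the problem with \emph{singular} terminal data: cost $0$ on $\Gamma_{T,\ell}^{+}$ and $+\infty$ on $\{T\}\times\mathbb{R}^{(\ell-1)\times d}\times D$; equivalently, the admissible class $\hat U$ in \eqref{Eq34} demands $\theta\le T$ with exit through $\Gamma_{T,\ell}^{+}$. For fixed $\epsilon>0$, an open-loop $\hat u$ leaves, in general, a positive-probability event on which the perturbed trajectory fails to exit through $\Gamma_{T,\ell}^{+}$ by time $T$; on this event the terminal cost is $+\infty$, so the verification expectation is identically $+\infty$ (equivalently, $\hat u$ is simply not admissible for the stochastic problem). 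Dominated convergence cannot repair this: an infinite penalty times a vanishing probability is not a vanishing quantity, and no integrable dominating function exists. This failure is precisely why the penalty functions $\Phi^{\epsilon,\ell}_M$ and Lemma~\ref{L3} exist, and it is how the paper proves this direction (Equation~\eqref{Eq42}): the upper bound is run through the \emph{finitely} penalized values, using the comparison \eqref{Eq43}, the inequality \eqref{Eq44} and Lemma~\ref{L3}, before letting $\epsilon\to 0$ and $M\to\infty$. Your final paragraph does recognize that controlling $I^{\epsilon,\ell}-J^{\epsilon,\ell}_{\Phi_M}$ uniformly in $\epsilon$ (the $\epsilon$--$M$ interchange) is a crux, but you present it only as needed to upgrade pointwise convergence to uniform convergence on compacts; in fact it is needed to obtain the pointwise upper bound at all, because the direct verification you propose in its place does not close.
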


\begin{proof}
It is suffices to show the following conditions
\begin{align}
  \limsup_{\epsilon \rightarrow 0} \epsilon\,\log \mathbb{P}_{s, x_{\widehat{1,\ell}}}^{\epsilon} \Bigl \{ x^{\epsilon,\ell}(\theta) \in \partial D \Bigl\} \le -I_{D}^{\epsilon,\ell} \Bigl(\bigl(s, x^{\epsilon,1}, \ldots, x^{\epsilon,\ell} \bigr); \, \partial D \Bigr) \label{Eq41}
\end{align}
and
\begin{align}
 \liminf_{\epsilon \rightarrow 0} \epsilon\,\log \mathbb{P}_{s, x_{\widehat{1,\ell}}}^{\epsilon} \Bigl \{ x^{\epsilon,\ell}(\theta) \in \partial D \Bigl\} \ge -I_{D}^{\epsilon,\ell} \Bigl(\bigl(s, x^{\epsilon,1}, \ldots, x^{\epsilon,\ell} \bigr); \, \partial D \Bigr), \label{Eq42}
\end{align}
uniformly for all $\bigl(s, x^{\epsilon,1}(s), \ldots, x^{\epsilon,\ell}(s) \bigr)$ in any compact subset $\bar{\Omega}_{\ell}$.

Note that $I_{D}^{\epsilon,\ell} \Bigl(\bigl(s, x^{\epsilon,1}, \ldots, x^{\epsilon,\ell} \bigr); \, \partial D \Bigr) = I^{\epsilon,\ell} \bigl(s, x^{\epsilon,1}, \ldots, x^{\epsilon,\ell} \bigr)$ (cf. Equation~\eqref{Eq38}), then the upper bound in Equation~\eqref{Eq41} can be verified using the Ventcel-Freidlin asymptotic estimates (see \cite[pp.\,332--334]{Fre76}, \cite{VenFre70} or \cite{Ven73}). 

On the other hand, to prove the lower bound in Equation~\eqref{Eq42}, we introduce a penalty function $\Phi_M^{\epsilon,\ell} \bigl(\cdot\bigr)$ (with $\Phi_M^{\epsilon,\ell}\bigl(t, y^{1}, \ldots, y^{\ell} \bigr)=0$ for $\bigl(t, y^{1}, \ldots, y^{\ell}\bigr) \in \Gamma_{T, \ell}^{+}$); and write $g_s^{\epsilon,\ell}\bigl(\cdot\bigr)=g_{s,M}^{\epsilon,\ell}\bigl(\cdot\bigr) \Bigl(\equiv \mathbb{E}_{s, x_{\widehat{1,\ell}}}^{\epsilon} \Bigl \{\exp\Bigl(-\frac{1} {\epsilon} \Phi_{M}^{\epsilon,\ell} \bigl(\cdot\bigr) \Bigr) \Bigr\}\Bigr)$ and $J^{\epsilon, \ell}=J_{\Phi_M}^{\epsilon,\ell}(\cdot)$, with $\Phi^{\epsilon,\ell} \bigl(\cdot\bigr)=\Phi_M^{\epsilon,\ell}\bigl(\cdot\bigr)$. From the boundary condition in Equation~\eqref{Eq28}, then, for each $M$, we have
\begin{align}
 g^{\epsilon,\ell}\bigl(s, x^{\epsilon,1}, \ldots, x^{\epsilon,\ell} \bigr) \le g_{s,M}^{\epsilon,\ell}\bigl(s, x^{\epsilon,1}, \ldots, x^{\epsilon,\ell} \bigr). \label{Eq43}
\end{align}
Using Lemma~\ref{L3} and noting further the following 
\begin{align}
J_{\Phi_M}^{\epsilon,\ell} \bigl(s, x^{\epsilon,1}, \ldots, x^{\epsilon,\ell} \bigr) \ge I_{D}^{\epsilon,\ell} \Bigl(\bigl(s, x^{\epsilon,1}, \ldots, x^{\epsilon,\ell} \bigr); \, \partial D \Bigr).  \label{Eq44}
\end{align}
Then, the lower bound in Equation~\eqref{Eq42} holds uniformly for all $\bigl(s, x^{\epsilon,1}(s), \ldots, x^{\epsilon,\ell}(s) \bigr)$ in any compact subset $\bar{\Omega}_{\ell}$. This completes the proof of Proposition~\ref{P2}. 
\end{proof}

\begin{remark} \label{R4}
Here, it is worth remarking that Proposition~\ref{P2} is useful for obtaining an asymptotic information on the behavior of the distributed control systems. For example, an asymptotic information on the time-duration for which the diffusion process $x^{\epsilon,i}(t)$ is confined to the given or prescribed domain $D$ (with respect to the admissible controls $u_{i} = \kappa_{i}(x^{\epsilon,i}) \in \mathcal{U}_{i}$, for $i = 1, 2 \ldots, n$).
\end{remark}

\section{Concluding Remarks} \label{S3}
In this paper, we have provided an asymptotic estimate on the exit probability with which the diffusion process (corresponding to a chain of distributed control systems with small random perturbation) exits from the given bounded open domain during a certain time interval. In particular, we have argued that such an asymptotic estimate can be obtained based on a precise interpretation of the exit probability function as a value function for a family of stochastic control problems that are associated with the underlying chain of distributed control systems. Finally, it is worth mentioning that it would be interesting to characterize, in line with \cite{DelM10}, how the random perturbation propagates through the chain of distributed control systems for a fixed perturbation parameter $\epsilon>0$.

%
%
%
%
%

\end{document}